\newcommand{\Bis}{\Gamma}
\newcommand{\skel}[1]{^{(#1)}}
\newcommand{\Is}{\mathop{\mathrm{Is}}}
\newcommand{\dom}{\mathop{\boldsymbol d}}
\newcommand{\ran}{\mathop{\boldsymbol r}}
\renewcommand{\to}{\longrightarrow}
\newcommand{\inv}{^{-1}}
\newcommand{\p}{\varphi}
\newcommand{\ov}[1]{\ensuremath{\overline {#1}}}
\newcommand{\til}[1]{\ensuremath{\widetilde {#1}}}
\newcommand{\wh}{\widehat}
\newcommand{\sour}{\mathop{\boldsymbol s}}
\newtheorem{Thm}{Theorem}[section]
\newtheorem{Prop}[Thm]{Proposition}
\newtheorem{Lemma}[Thm]{Lemma}
{\theoremstyle{definition}
}
{\theoremstyle{remark}
}
\newtheorem{Cor}[Thm]{Corollary}
{\theoremstyle{remark}
}
{\theoremstyle{remark}
}
\theoremstyle{remark}
\theoremstyle{remark}
\theoremstyle{remark}
\numberwithin{equation}{section}
\title[Prime  \'etale groupoid algebras]{Prime \'etale groupoid algebras with applications to inverse semigroup and Leavitt path algebras}
\author{Benjamin Steinberg}
\address[B.~Steinberg]{%
    Department of Mathematics\\
    City College of New York\\
    Convent Avenue at 138th Street\\
    New York, New York 10031\\
    USA}
\email{bsteinberg@ccny.cuny.edu}
\thanks{The author was supported by  NSA MSP \#H98230-16-1-0047.}
\date{February 5, 2018}
\keywords{\'etale groupoids, inverse semigroups, groupoid algebras, Leavitt path algebras, prime rings}
\subjclass[2010]{20M18, 20M25, 16S99,16S36, 22A22, 18F20}
\begin{document}

\begin{abstract}
In this paper we give some sufficient and some necessary conditions for an \'etale groupoid algebra to be a prime ring.  As an application we recover the known primeness results for inverse semigroup algebras and Leavitt path algebras. It turns out that primeness of the algebra is connected with the dynamical property of topological transitivity of the groupoid.  We obtain analogous results for semiprimeness.
\end{abstract}

\maketitle

\section{Introduction}
The author introduced in~\cite{mygroupoidalgebra} a purely algebraic analogue of \'etale groupoid $C^*$-algebras~\cite{Renault,Paterson}, both to serve as a unifying factor between the theories of Leavitt path algebras~\cite{LeavittBook} and graph $C^*$-algebras~\cite{cuntzkrieger} and to provide a new approach to inverse semigroup algebras.   In recent years, there has been a lot of work around \'etale groupoid algebras~\cite{operatorsimple1,operatorguys2,reconstruct,GroupoidMorita,CarlsenSteinberg,Strongeffective,ClarkPardoSteinberg,groupoidbundles,groupoidprimitive,Clarkdecomp,GonRoy2017a,GonRoy2017,Hazrat2017,AraSims2017,Purelysimple,gpdchain}.

The groupoid and inverse semigroup interaction has been, in this author's opinion, a two-way street.  The simplicity criteria for algebras of Hausdorff groupoids~\cite{operatorsimple1,operatorguys2} enabled the author~\cite{groupoidprimitive} to make progress on an old question of Douglas Munn about which contracted inverse semigroup algebras are simple~\cite{MunnAlgebraSurvey}.  On the other hand, inspired by work of Domanov~\cite{Domanov} and Munn~\cite{Munnprimitive,MunnSemiprim,MunnSemiprim2} for inverse semigroup algebras, the author made significant progress toward the study of primitivity and semiprimitivity of groupoid algebras~\cite{groupoidprimitive}.  In particular, the primitivity results for Leavitt path algebras~\cite{primleavittbell} were given a more conceptual explanation.

In this paper, we turn to the related question of when a groupoid algebra is a prime or semiprime ring;  recall that a ring is prime if $0$ is a prime ideal and it is semiprime if it has no nilpotent ideals.  Of course (semi)primitive rings are (semi)prime, but the converse is not true.  In~\cite{primleavittbell} it was observed that, for countable graphs satisfying condition (L), primitivity and primeness were equivalent but that things changed for infinite graphs.  This too deserves a conceptual explanation.

Inspired by results of Munn~\cite{Munnprime} we develop some necessary conditions and some sufficient conditions for an \'etale groupoid algebra to be a prime or semiprime ring.  Note that group algebras are particular cases of \'etale groupoid algebras and primeness and semiprimeness for group algebras was characterized long ago~\cite{connell,PassmanBook}.  Our conditions are general enough to recover all the known results for both inverse semigroup algebras and for Leavitt path algebras.  And, as is often the case, the proofs for groupoids are easier than the original inverse semigroup arguments.

It turns out that both primeness and primitivity are related to dynamical properties of the groupoid.  Generalizing the notion of topological transitivity from dynamical systems, we say that an \'etale groupoid is topologically transitive if each non-empty open invariant subspace of the unit space is dense.    If the groupoid has a dense orbit, then it is topologically transitive; the converse holds for second countable groupoids by a Baire category argument.  For an effective Hausdorff groupoid, it turns out that primitivity of its algebra over a field is equivalent to having a dense orbit~\cite{groupoidprimitive} and it is shown here that primeness is equivalent to topological transitivity.  In particular, for second countable Hausdorff effective groupoids, primitivity and primeness are equivalent.  This explains the results of~\cite{primleavittbell} because a Leavitt path algebra satisfies condition (L) precisely when the associated groupoid is effective and the groupoid is second countable when the graph is countable.

For more general groupoids, topological transitivity is not enough for primeness.  We can show that if there is a dense orbit whose isotropy group has a prime group algebra (i.e., has no non-trivial normal subgroup of finite order), then the groupoid algebra (over an integral domain) is prime.  This generalizes Munn's result for ($0$-)bisimple inverse semigroup algebras~\cite{Munnprime}.

It is an open question to characterize completely (semi)prime \'etale group\-oid algebras (or even inverse semigroups algebras).

The paper is organized as follows.  We begin with a section on groupoids, inverse semigroups and their algebras.  In the next section we define topological transitivity for \'etale groupoids and establish some basic results concerning the notion.  The following section turns to our necessary and our sufficient conditions for primeness and semiprimeness of groupoid algebras (unfortunately the conditions do not coincide).  The final section recovers the results of~\cite{primleavittbell} characterizing prime and primitive Leavitt path algebras from the more general groupoid results and recovers the results of Munn~\cite{Munnprime} for inverse semigroup algebras.

\section{Groupoids, inverse semigroups and their algebras}
This section contains preliminaries about groupoids, inverse semigroups and their algebras.  Lawson~\cite{Lawson} is our recommended reference for inverse semigroup theory. For \'etale groupoids, we recommend~\cite{Renault,Exel,Paterson}.  Algebras of ample groupoids were introduced in~\cite{mygroupoidalgebra}; see also~\cite{mygroupoidarxiv} for some additional results not included in~\cite{mygroupoidalgebra}, as well as~\cite{operatorguys1} where the notion was introduced independently.

\subsection{Inverse semigroups}
An \emph{inverse semigroup} is a semigroup $S$ such that, for all $s\in S$, there exists unique $s^*\in S$ with $ss^*s=s$ and $s^*ss^*=s^*$.  Notice that $s^*s,ss^*$ are idempotents. Also, note that $(st)^*=t^*s^*$.  Idempotents of $S$ commute and so $E(S)$ is a subsemigroup.  Moreover, it is a meet semilattice with respect to the ordering $e\leq f$ if $ef=e$.  In fact, $S$ itself is ordered by $s\leq t$ if $s=te$ for some idempotent $e\in E(S)$ or, equivalently, $s=ft$ for some $f\in E(S)$.  This partial order is compatible with multiplication and stable under the involution.
%We put $s^{\downarrow}=\{t\in S\mid t\leq s\}$ and $s^{\uparrow}=\{t\in S\mid t\geq s\}$.
If $e\in E(S)$, then $G_e=\{s\in S\mid s^*s=e=ss^*\}$ is a group called the \emph{maximal subgroup} of $S$ at $e$.  It is the group of units of the monoid $eSe$.

All groups are inverse semigroups, as are all (meet) semilattices. If $X$ is a topological space, then the set of all homeomorphisms between open subsets of $X$ is an inverse semigroup $I_X$ under the usual composition of partial functions.  An inverse semigroup $S$ has a \emph{zero element} $z$, if $zs=z=sz$ for all $s\in S$.  Zero elements are unique when they exist and will often be denoted by $0$.  The zero element of $I_X$ is the empty partial bijection.

By an action of an inverse semigroup $S$ on a space $X$, we mean a homomorphism $\theta\colon S\to I_X$ such that if we put $X_e=\mathrm{dom}(\theta(e))$, then  \[\bigcup_{e\in E(S)}X_e= X.\]  This last condition
is a non-degeneracy condition and implies, for instance, that a group must act by homeomorphisms.

If $R$ is a commutative ring with unit, then the \emph{semigroup algebra} $RS$ of an inverse semigroup $S$ is defined as the $R$-algebra with basis $S$ and multiplication extending that of $S$ via the distributive law. If $S$ is an inverse semigroup with zero element $z$, then the \emph{contracted semigroup algebra} is $R_0S=RS/Rz$.  The contracted semigroup algebra construction amounts to amalgamating the zero of $S$ with the zero of $R$ and it is universal for zero-preserving representations of $S$ into $R$-algebras.

%Occasionally, we shall require the notion of a \emph{generalized boolean algebra}, that is a relatively complemented, distributive lattice with bottom.  Generalized boolean algebras are, up to isomorphism, ideals in boolean algebras.

\subsection{\'Etale groupoids}
In this paper, following Bourbaki, compactness will include the Hausdorff axiom.  However, we do not require locally compact spaces to be Hausdorff. A topological groupoid $\mathscr G=(\mathscr G\skel 0,\mathscr G\skel 1)$ is \emph{\'etale} if its domain map $\dom$ (or, equivalently, its range map $\ran$) is a local homeomorphism.  In this case, identifying objects with identity arrows, we have that $\mathscr G\skel 0$ is an open subspace of $\mathscr G\skel 1$ and the multiplication map is a local homeomorphism.  Details can be found in~\cite{Paterson,resendeetale,Exel}.

Following~\cite{Paterson}, an \'etale groupoid is called \emph{ample} if its unit space $\mathscr G\skel 0$ is locally compact Hausdorff with a basis of compact open subsets. We shall say that an ample groupoid $\mathscr G$ is Hausdorff if $\mathscr G\skel 1$ is Hausdorff.

A \emph{local bisection} of an \'etale groupoid $\mathscr G$ is an open subset $U\subseteq \mathscr G\skel 1$ such that both $\dom|_U$ and $\ran|_U$ are homeomorphisms.  The local bisections form a basis for the topology on $\mathscr G\skel 1$~\cite{Exel}. The set $\Bis(\mathscr G)$ of local bisections is an inverse monoid under the binary operation \[UV = \{uv\mid u\in U,\ v\in V,\ \dom (u)=\ran (v)\}.\] The semigroup inverse is given by $U^* = \{u\inv\mid u\in U\}$ and $E(\Bis(\mathscr G))=\Bis(\mathscr G\skel 0)$. The inverse monoid $\Bis(\mathscr G)$ acts on $\mathscr G\skel 0$ by partial homeomorphisms by putting \[U\cdot x=\begin{cases}y, & \text{if there is $g\in U$ with $\dom(g)=x,\ran(g)=y$}\\ \text{undefined},  & \text{else.}\end{cases}\] The set $\Bis_c(\mathscr G)$ of compact local bisections  is an inverse subsemigroup of $\Bis(\mathscr G)$ (it is a submonoid if and only if $\mathscr G\skel 0$ is compact)~\cite{Paterson}. Note that $\mathscr G$ is ample if and only if $\Bis_c(\mathscr G)$ is a basis for the topology on $\mathscr G\skel 1$~\cite{Exel,Paterson}.

The \emph{isotropy subgroupoid} of a groupoid $\mathscr G=(\mathscr G\skel 0,\mathscr G\skel 1)$ is the subgroupoid $\Is(\mathscr G)$ with $\Is(\mathscr G)\skel 0=\mathscr G\skel 0$ and \[\Is(\mathscr G)\skel 1=\{g\in \mathscr G\skel 1\mid \dom(g)=\ran(g)\}.\]  The \emph{isotropy group} of $x\in \mathscr G\skel 0$ is the group \[G_x=\{g\in \mathscr G\skel 1\mid \dom(g)=x=\ran(g)\}.\] An \'etale groupoid is said to be \emph{effective} if $\mathscr G\skel 0=\mathrm{Int}(\Is(\mathscr G)\skel 1)$, the interior of the isotropy bundle.  It is well known, and easy to prove, that an ample groupoid $\mathscr G$ is effective if and only if the natural action of $\Bis_c(\mathscr G)$ on $\mathscr G\skel 0$ is faithful.

If $x\in \mathscr G\skel 0$, then the \emph{orbit} $\mathcal O_x$ of $x$ consists of all $y\in \mathscr G\skel 0$ such that there is an arrow $g$ with $\dom(g)=x$ and $\ran(g)=y$.  The orbits form a partition of $\mathscr G\skel 0$. If $\mathscr G$ is ample, then the orbits of $\mathscr G$ are precisely the orbits for the natural action of $\Bis_c(\mathscr G)$ on $\mathscr G\skel 0$.

A subset $X\subseteq \mathscr G\skel 0$ is \emph{invariant} if it is a union of orbits.
Equivalently, $X$ is invariant if and only if it is invariant under the natural action of $\Bis_c(\mathscr G)$ on $\mathscr G\skel 0$.  %An \'etale groupoid is said to be \emph{minimal} if $\mathscr G\skel 0$ has no proper, non-empty closed invariant subsets, or equivalently, if each orbit is dense.

A key example of an \'etale groupoid is that of a groupoid of germs.  Let $S$ be an inverse semigroup acting on a locally compact Hausdorff space $X$.  The groupoid of germs $\mathscr G=S\ltimes X$ is defined as follows. One puts $\mathscr G\skel 0=X$ and $\mathscr G\skel 1=\{(s,x)\in S\times X\mid x\in X_{s^*s}\}/{\sim}$ where $(s,x)\sim (t,y)$ if and only if $x=y$ and there exists $u\leq s,t$ with $x\in X_{u^*u}$. Note that if $S$ is a group, then there are no identifications. The $\sim$-class of an element $(s,x)$ is denoted $[s,x]$.  The topology on $\mathscr G\skel 1$ has basis all sets of the form $(s,U)$ where $U\subseteq X_{s^*s}$ is open and $(s,U) = \{[s,x]\mid x\in U\}$.  %Note that if $[t,x]\in (s,U)$, then $(t,V)\subseteq (s,U)$ for some open neighborhood $V$ with $x\in V\subseteq U$. Indeed, since $[t,x]=[s,x]$, there exists $u\leq s,t$ with $x\in X_{u^*u}$.  It follows that if $V=U\cap X_{u^*u}\cap X_{t^*t}$, then $x\in V$ and $[t,y]=[s,y]$ for all $y\in V$.  Thus each arrow $[t,x]$ has a basis of neighborhoods of the form $(t,U)$ with $U\subseteq X_{t^*t}$.
  One puts $\dom([s,x])=x$, $\ran([s,x])=sx$ and defines $[s,ty][t,y]=[st,y]$.  Inversion is given by $[s,x]\inv = [s^*,sx]$.  Note that $(s,X_{s^*s})\in \Bis(S\ltimes X)$ and if $X_{s^*s}$ is compact, then $(s,X_{s^*s})\in \Bis_c(S\ltimes X)$.  Consult~\cite{Exel,Paterson,mygroupoidalgebra} for details.

\subsection{\'Etale groupoid algebras}
Fix now a commutative ring with unit $R$.  The author~\cite{mygroupoidalgebra} associated an $R$-algebra $R\mathscr G$ to each ample groupoid $\mathscr G$ as follows.  We define $R\mathscr G$ to be the $R$-span in $R^{\mathscr G\skel 1}$ of the characteristic functions $\chi_U$ of compact open subsets $U$ of $\mathscr G\skel 1$.  It is shown in~\cite[Proposition~4.3]{mygroupoidalgebra} that $R\mathscr G$ is spanned by the elements $\chi_U$ with $U\in \Bis_c(\mathscr G)$.  If $\mathscr G\skel 1$ is Hausdorff, then $R\mathscr G$ consists of the locally constant $R$-valued functions on $\mathscr G\skel 1$ with compact support.  Convolution is defined on $R\mathscr G$ by \[\p\ast \psi(g)=\sum_{\dom(h)=\dom(g)}\p(gh\inv)\psi(h).\]  The finiteness of this sum is proved in~\cite{mygroupoidalgebra}. The fact that the convolution belongs to $R\mathscr G$ rests on the computation $\chi_U\ast \chi_V=\chi_{UV}$ for $U,V\in \Bis_c(\mathscr G)$~\cite{mygroupoidalgebra}.  Note that $R\mathscr G$ is a quotient of the inverse semigroup algebra $R\Bis_c(\mathscr G)$.

%Note that since $\mathscr G\skel 0$ is open in $\mathscr G\skel 1$, it follows that $R\mathscr G\skel 0$ (where we view $\mathscr G\skel 0$ as a groupoid consisting of identities) is a subalgebra of $R\mathscr G$.  Also observe that $R\mathscr G\skel 0$ is just the ring of $R$-valued continuous functions with compact support on $\mathscr G\skel 0$ with pointwise multiplication, and hence is commutative.

%The algebra $\mathbb C\mathscr G$, in the case where $\mathscr G$ is Hausdorff, has been further studied in~\cite{operatorguys1,operatorsimple1}; see also~\cite{operatorsimple2}.  Morita equivalence  with applications to Leavitt path algebras~\cite{Leavittsemiprimitivity} was studied in~\cite{GroupoidMorita} and a sheaf representation of $R\mathscr G$-modules was obtained in~\cite{groupoidbundles}. Recently, the growth of \'etale groupoid algebras was studied in~\cite{Nekgrpd}.

%The algebra $R\mathscr G$ admits an involution $\p\mapsto \check{\p}$ where $\check{\p}(g)=\p(g\inv)$ and so dual notions like right and left primitivity are equivalent for $R\mathscr G$.  Hence we shall work alternatively with right and left modules as we find convenient.

The algebra $R\mathscr G$ is unital if and only if $\mathscr G\skel 0$ is compact, but it always has local units (i.e., is a directed union of unital subrings)~\cite{mygroupoidalgebra,groupoidbundles}.

\section{Topological transitivity of \'etale groupoids}
Primeness of ample groupoid algebras turns out to be closely related to the dynamical property of topological transitivity.  The definition is a straightforward adaptation to groupoids of a topologically transitive group action on a space.  Fix an \'etale groupoid $\mathscr G$.

Let us begin with two elementary propositions.  The first one gives us a large source of open invariant subspaces.

\begin{Prop}\label{p:open.inv}
Let $U\subseteq \mathscr G\skel 0$ be open.  Then $\ran\dom^{-1}(U) = \dom\ran^{-1}(U)$ is open and invariant.  It is, moreover, the smallest invariant subset containing $U$.
\end{Prop}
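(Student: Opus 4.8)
The plan is to name the set $V=\ran\dom^{-1}(U)$, give it a concrete description, and then dispatch the four assertions (equality of the two descriptions, openness, invariance, and minimality) one at a time; only the openness step genuinely uses the \'etale hypothesis, while the rest are formal consequences of the groupoid axioms. Concretely, $V=\{\ran(g)\mid g\in\mathscr G\skel 1,\ \dom(g)\in U\}$ and $\dom\ran^{-1}(U)=\{\dom(g)\mid g\in\mathscr G\skel 1,\ \ran(g)\in U\}$. To identify these two sets I would use that inversion is a bijection of $\mathscr G\skel 1$ interchanging $\dom$ and $\ran$: if $y=\ran(g)$ with $\dom(g)\in U$, then $g\inv$ has $\dom(g\inv)=y$ and $\ran(g\inv)=\dom(g)\in U$, placing $y$ in $\dom\ran^{-1}(U)$; the opposite inclusion is the same argument read backwards.

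For openness I would invoke that both structure maps of an \'etale groupoid are local homeomorphisms, hence open and continuous: $\dom^{-1}(U)$ is open by continuity of $\dom$, and then $V=\ran(\dom^{-1}(U))$ is open because $\ran$ is an open map. For invariance I would argue directly with composable arrows. Let $y\in V$, witnessed by an arrow $g$ with $\dom(g)\in U$ and $\ran(g)=y$, and let $z$ belong to the orbit of $y$, witnessed by an arrow $h$ with $\dom(h)=y$ and $\ran(h)=z$. Since $\dom(h)=\ran(g)$, the product $hg$ is defined; as $\dom(hg)=\dom(g)\in U$ and $\ran(hg)=z$, we obtain $z\in V$, so $V$ is a union of orbits.

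Finally, for minimality I would first note that $U\subseteq V$, since each $x\in U$ equals $\ran(x)$ for the identity arrow at $x$, whose source is $x\in U$. Conversely, if $W$ is any invariant set with $U\subseteq W$ and $y\in V$ is witnessed by $g$ as above, then $\dom(g)\in U\subseteq W$ and $y=\ran(g)$ lies in the orbit of $\dom(g)$; invariance of $W$ then forces $y\in W$, giving $V\subseteq W$. The only real obstacle here is bookkeeping: keeping the composition convention straight (a product $hg$ requires $\dom(h)=\ran(g)$, with $\dom(hg)=\dom(g)$ and $\ran(hg)=\ran(h)$) so that the witnessing arrows compose in the correct order in the invariance and minimality steps.
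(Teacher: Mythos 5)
Your proposal is correct and takes essentially the same route as the paper: openness is immediate from the structure maps, invariance comes from composing the witnessing arrows, and minimality follows from $U\subseteq \ran\dom^{-1}(U)$ together with invariance of any superset of $U$. The only (minor) divergence is that you prove the equality $\ran\dom^{-1}(U)=\dom\ran^{-1}(U)$ directly via the inversion map, whereas the paper deduces it indirectly --- both sets are invariant, contain $U$, and $\ran\dom^{-1}(U)$ lies in every invariant superset of $U$, so symmetry gives both inclusions --- and your direct argument is, if anything, slightly cleaner.
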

\begin{proof}
First note that $\ran\dom^{-1}(U)$ is trivially open. Also it is invariant because if $x\in \ran\dom\inv (U)$ and $g\colon x\to y$, then there exists $h\colon z\to x$ with $z\in U$ and so $gh\colon z\to y$ shows that $y\in \ran\dom\inv(U)$. Similarly, $\dom\ran\inv(U)$ is invariant. Obviously, $U\subseteq \ran\dom\inv(U)$ and $\ran\dom\inv(U)$ is contained in any invariant subset containing $U$.  In particular, $\ran\dom\inv(U)\subseteq \dom\ran\inv(U)$.  By symmetry, we obtain the reverse containment.
\end{proof}

The next proposition observes that the interior and the closure of an invariant set are invariant.

\begin{Prop}\label{p:int.closure}
Let $X\subseteq \mathscr G\skel 0$ be invariant.  Then $\mathrm{Int}(X)$ and $\overline{X}$ are also invariant.
\end{Prop}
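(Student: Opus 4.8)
The plan is to reduce both assertions to the single fact that the interior of an invariant set is invariant. The key observation is that, since invariance means being a union of orbits and the orbits partition $\mathscr G\skel 0$, the complement $\mathscr G\skel 0\setminus X$ of an invariant set $X$ is again invariant. Granting that $\mathrm{Int}$ preserves invariance, the closure statement then follows formally: the identity $\overline{X}=\mathscr G\skel 0\setminus \mathrm{Int}(\mathscr G\skel 0\setminus X)$ exhibits $\overline X$ as the complement of the interior of the invariant set $\mathscr G\skel 0\setminus X$, hence as the complement of an invariant set, hence as invariant. So the real content is the interior case.

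To show $\mathrm{Int}(X)$ is invariant, I would argue directly from the definition of orbit. Let $x\in\mathrm{Int}(X)$ and let $g$ be an arrow with $\dom(g)=x$ and $\ran(g)=y$; the goal is $y\in\mathrm{Int}(X)$. Since the local bisections form a basis for the topology on $\mathscr G\skel 1$, choose a local bisection $U$ containing $g$. Then $\dom|_U$ and $\ran|_U$ are homeomorphisms onto the open sets $\dom(U),\ran(U)\subseteq\mathscr G\skel 0$, so the partial homeomorphism $\tau=\ran|_U\circ(\dom|_U)\inv\colon \dom(U)\to\ran(U)$ is a homeomorphism sending $x$ to $y$. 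Because $x\in\mathrm{Int}(X)\cap\dom(U)$, which is open, I can pick an open set $W$ with $x\in W\subseteq X\cap\dom(U)$. Then $\tau(W)$ is open and contains $y=\tau(x)$. Finally $\tau(W)\subseteq X$: each $w\in W\subseteq X$ is the domain of a (unique) arrow of $U$ with range $\tau(w)$, so $\tau(w)$ lies in the orbit of $w$ and hence in $X$ by invariance. Thus $y$ has an open neighborhood inside $X$, giving $y\in\mathrm{Int}(X)$.

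With the interior case in hand, the closure case is immediate from the complement argument above. I expect the only subtle point to be bookkeeping in the interior step: specifically, checking that the action of the chosen local bisection $U$ genuinely carries the whole neighborhood $W$ of $x$ into $X$, rather than merely carrying the single point $x$ there. This is exactly where invariance is used (not merely membership of $y$ in the orbit of $x$), applied uniformly to every point of $W$. Everything else is formal, and no compactness of the bisection is needed, so the argument works for an arbitrary \'etale groupoid $\mathscr G$.
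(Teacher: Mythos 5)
Your proposal is correct, and its overall skeleton matches the paper's: both proofs dispose of the closure statement by the same formal reduction, noting that invariant sets are closed under complementation (orbits partition $\mathscr G\skel 0$) and writing $\ov X=\mathscr G\skel 0\setminus \mathrm{Int}(\mathscr G\skel 0\setminus X)$, so that only the interior case carries content. Where you diverge is in how the interior case is executed. The paper simply invokes Proposition~\ref{p:open.inv}: given $x\in U\subseteq X$ with $U$ open and an arrow $g\colon x\to y$, the set $\ran\dom^{-1}(U)$ is open, contains $y$, and is contained in $X$ by invariance, so $y\in\mathrm{Int}(X)$ in one line. You instead re-derive a local version of that lemma by hand: you choose a local bisection $U$ through $g$, form the partial homeomorphism $\tau=\ran|_U\circ(\dom|_U)\inv$, and push an open neighborhood $W$ of $x$ inside $X$ across to the open set $\tau(W)\ni y$, with $\tau(W)\subseteq X$ again by invariance applied pointwise. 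Both arguments ultimately rest on \'etaleness; the difference is that the paper's use of it is packaged inside the openness of $\ran\dom^{-1}(U)$ (openness of the range map), whereas yours surfaces through the bisection basis. The paper's route is shorter because the key lemma was already on record; yours buys a self-contained argument that makes completely explicit the one subtle point you correctly flag, namely that invariance must be applied uniformly to every point of $W$ and not just to $x$. Your closing remark that no compactness is needed, so the argument works for arbitrary \'etale groupoids, is also consistent with the paper, which proves this proposition in the \'etale (not merely ample) setting.
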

\begin{proof}
Since $\ov{X} = \mathscr G\skel 0\setminus \mathrm{Int}(\mathscr G\skel 0\setminus X)$ and invariant sets are closed under complementation, it suffices to handle the case of $\mathrm{Int}(X)$.  Suppose $x\in U\subseteq X$ with $U$ open and let $g\colon x\to y$ be an arrow.  Then $y\in \ran\dom^{-1}(U)\subseteq X$, since $X$ is invariant, and $\ran\dom^{-1}(U)$ is open.  Thus $y\in \mathrm{Int}(X)$.
\end{proof}

The following proposition establishes the equivalence of a number of conditions, any of which could then serve as the definition of topological transitivity.

\begin{Prop}\label{p:top.trans}
Let   $\mathscr G$ be an \'etale groupoid.  Then the following are equivalent.
\begin{enumerate}
  \item Every pair of non-empty open invariant subsets of $\mathscr G\skel 0$ has non-empty intersection.
  \item Each non-empty open invariant subset of $\mathscr G\skel 0$ is dense.
  \item Each invariant subset of $\mathscr G\skel 0$ is either dense or nowhere dense.
  \item If $\emptyset\neq U,V\subseteq \mathscr G\skel 0$ are open subsets, then $\dom\inv (U)\cap \ran\inv (V)\neq \emptyset$.
  \item $\mathscr G\skel 0$ is not a union of two proper, closed invariant subsets.
\end{enumerate}
\end{Prop}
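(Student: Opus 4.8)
The plan is to establish the five equivalences through a compact web of implications, relying on the two preceding propositions: that $\ran\dom\inv(U)$ is the smallest open invariant set containing an open set $U$ (Proposition~\ref{p:open.inv}), and that both $\mathrm{Int}(X)$ and $\overline X$ are invariant whenever $X$ is (Proposition~\ref{p:int.closure}). The organizing principle is that complementation $X\mapsto \mathscr G\skel 0\setminus X$ restricts to an inclusion-reversing bijection between open invariant and closed invariant subsets, since invariant sets are closed under complementation. I would prove (1)$\Leftrightarrow$(2), (2)$\Leftrightarrow$(3), (2)$\Leftrightarrow$(4) and (1)$\Leftrightarrow$(5) separately, each being short.

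For (1)$\Leftrightarrow$(2): a dense open invariant set meets every non-empty open set, hence any other open invariant set, giving (2)$\Rightarrow$(1); conversely, if a non-empty open invariant $W$ were not dense, then $\mathscr G\skel 0\setminus \overline W$ would be a non-empty open invariant set disjoint from $W$ (using that $\overline W$ is invariant), contradicting (1). The equivalence (1)$\Leftrightarrow$(5) is then immediate by De Morgan: a pair of disjoint non-empty open invariant sets is exactly the complement of a pair of proper closed invariant sets covering $\mathscr G\skel 0$.

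For (2)$\Leftrightarrow$(3): a non-empty open set is never nowhere dense, so (3) forces each non-empty open invariant set to be dense, yielding (2); conversely, given an arbitrary invariant $X$ that is not nowhere dense, the set $\mathrm{Int}(\overline X)$ is non-empty, open and invariant (both operations preserve invariance by Proposition~\ref{p:int.closure}), hence dense by (2), and since it lies inside the closed set $\overline X$ we conclude $\overline X=\mathscr G\skel 0$, so $X$ is dense.

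The step requiring the most care is (2)$\Leftrightarrow$(4), whose content is really the reformulation of condition (4). I would first observe that $\dom\inv(U)\cap \ran\inv(V)\neq\emptyset$ holds precisely when some arrow runs from a point of $U$ to a point of $V$, i.e.\ exactly when $\ran\dom\inv(U)\cap V\neq\emptyset$; thus (4) asserts that the saturation $\ran\dom\inv(U)$ is dense for every non-empty open $U$. Granting (2), this saturation is non-empty, open and invariant by Proposition~\ref{p:open.inv}, hence dense; conversely, applying (4) with $U=W$ for $W$ open invariant and using $\ran\dom\inv(W)=W$ recovers density of $W$, giving (2). I expect the main obstacle to be purely this bookkeeping of arrows rather than anything analytic, and the degenerate case $\mathscr G\skel 0=\emptyset$ (where all five conditions hold vacuously) can be dispatched at the outset. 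Should I instead prefer to close a single cycle through (3)$\Rightarrow$(5), the only extra ingredient needed is the standard fact that a finite union of closed nowhere dense sets is nowhere dense.
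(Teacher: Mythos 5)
Your proof is correct, and while it uses the same two ingredients as the paper (Proposition~\ref{p:open.inv} for saturations and Proposition~\ref{p:int.closure} for $\mathrm{Int}(\ov X)$), it is organized genuinely differently. The paper closes a single cycle $(4)\Rightarrow(3)\Rightarrow(2)\Rightarrow(1)\Rightarrow(4)$, with $(1)\Leftrightarrow(5)$ by complementation; you instead prove four two-way equivalences hubbed at (1) and (2). The substantive difference is your handling of condition (4): you first reformulate it as the statement that $\dom\inv(U)\cap\ran\inv(V)\neq\emptyset$ iff $\ran\dom\inv(U)\cap V\neq\emptyset$, i.e.\ that (4) says every non-empty open set has dense saturation. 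With that observation, $(2)\Rightarrow(4)$ is immediate from Proposition~\ref{p:open.inv} (the saturation is non-empty, open and invariant, hence dense), and $(4)\Rightarrow(2)$ follows from $\ran\dom\inv(W)=W$ for $W$ open invariant. This eliminates the paper's only genuinely groupoid-theoretic step, the composition $gh\in\dom\inv(U)\cap\ran\inv(V)$ of arrows $h\colon y\to x$ and $g\colon x\to z$ obtained by intersecting the two saturations $\ran\dom\inv(U)$ and $\dom\ran\inv(V)$ via (1); in your version all arrow composition is quarantined inside Proposition~\ref{p:open.inv}. Likewise your $(2)\Rightarrow(3)$ (a dense open invariant set inside the closed set $\ov X$ forces $\ov X=\mathscr G\skel 0$) replaces the paper's arrow-chase in $(4)\Rightarrow(3)$ by a purely topological remark. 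What each buys: the paper's cycle verifies fewer implications; yours verifies more but each is essentially formal bookkeeping, and the density-of-saturation reading of (4) is a clarifying addition. Your closing remark about routing $(3)\Rightarrow(5)$ through the fact that a finite union of closed nowhere dense sets is nowhere dense is also sound but unneeded given your $(1)\Leftrightarrow(5)$ by De Morgan, which is exactly the paper's step.
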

\begin{proof}
The first and last item are trivially equivalent by taking complements.  Suppose that (4) holds and let $X$ be an invariant subset.  Then $U=\mathrm{Int}(\ov X)$ is invariant by Proposition~\ref{p:int.closure}.  Suppose $U\neq \emptyset$ and let $V\neq \emptyset$ be any open subset of $\mathscr G\skel 0$.  By (4), there is an element $g\in \dom\inv(U)\cap \ran\inv (V)$.  Then $g\colon x\to y$ with $x\in U\subseteq \ov X$ and $y\in V$.  Since $\ov X$ is invariant, $y\in \ov X$ and so $X\cap V\neq \emptyset$.  We conclude that $X$ is dense.  Trivially, (3) implies (2) since a non-empty open set is not nowhere dense.  Also (2) implies (1) by definition of density.  Assume now that (1) holds and let $\emptyset\neq U,V\subseteq \mathscr G\skel 0$ be open.  Then $\ran\dom\inv(U)$ and $\dom\ran\inv(V)$ are non-empty, open invariant subsets by Proposition~\ref{p:open.inv}.  So, by (1), there is an element $x\in \ran\dom\inv(U)\cap \dom\ran\inv(V)$ and hence there are arrows $h\colon y\to x$ and $g\colon x\to z$ with $y\in U$ and $z\in V$.  Then $gh\in \dom\inv(U)\cap \ran\inv (V)$.  This completes the proof.
\end{proof}

We define an \'etale groupoid $\mathscr G$ to be \emph{topologically transitive} if the equivalent conditions of Proposition~\ref{p:top.trans} hold. Note that if $G$ is a discrete group acting on a locally compact Hausdorff space $X$, then the groupoid $G\ltimes X$ is topologically transitive if and only if the action of $G$ on $X$ is topologically transitive in the usual sense.
Topological transitivity is closely related to the existence of a dense orbit.

\begin{Lemma}\label{l:dense.orbit}
If $\mathscr G$ has a dense orbit, then it is topologically transitive.  If $\mathscr G\skel 0$ is locally compact, Hausdorff and second countable, then the converse holds (in fact, the set of points with dense orbit is co-meager).
\end{Lemma}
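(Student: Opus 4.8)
The plan is to prove the two implications separately, using the equivalent conditions of Proposition~\ref{p:top.trans} for the forward direction and a Baire category argument for the converse.

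For the forward direction I would verify condition~(2) of Proposition~\ref{p:top.trans}, namely that every non-empty open invariant subset is dense. Suppose $\OO_x$ is a dense orbit and let $V$ be a non-empty open invariant subset of $\mathscr G\skel 0$. Since $\OO_x$ is dense and $V$ is open and non-empty, the two sets must meet, so $V$ contains some point of $\OO_x$. But $V$ is a union of orbits, and meeting $\OO_x$ forces $\OO_x\subseteq V$. Hence $\ov V\supseteq \ov{\OO_x}=\mathscr G\skel 0$, so $V$ is dense, as required.

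For the converse, assume $\mathscr G\skel 0$ is locally compact, Hausdorff and second countable, and that $\mathscr G$ is topologically transitive. Fix a countable basis $\{U_n\}_{n\geq 1}$ of non-empty open subsets of $\mathscr G\skel 0$. For each $n$ put $W_n=\dom\ran\inv(U_n)=\ran\dom\inv(U_n)$, which by Proposition~\ref{p:open.inv} is open and invariant; since $U_n\subseteq W_n$ it is non-empty, and hence dense by topological transitivity in the form of condition~(2). The key observation is the identity $W_n=\{x\in\mathscr G\skel 0\mid \OO_x\cap U_n\neq\emptyset\}$: indeed $x\in\dom\ran\inv(U_n)$ means exactly that some arrow $g$ satisfies $\dom(g)=x$ and $\ran(g)\in U_n$, i.e. the orbit of $x$ meets $U_n$.

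Finally, a point $x$ has dense orbit if and only if $\OO_x$ meets every non-empty open set, equivalently every basic open set $U_n$, equivalently $x\in\bigcap_{n\geq 1}W_n$. Because $\mathscr G\skel 0$ is locally compact Hausdorff it is a Baire space, so the countable intersection of the dense open sets $W_n$ is dense, in particular non-empty and co-meager. This simultaneously produces a dense orbit and shows that the set of points with dense orbit is co-meager. The only real subtlety—and the step I would take care to get right—is the identification of $W_n$ with $\{x\mid \OO_x\cap U_n\neq\emptyset\}$ together with the fact, furnished by Proposition~\ref{p:open.inv}, that this set is open and invariant; granting this, the remainder is a routine invocation of the Baire category theorem.
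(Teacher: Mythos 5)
Your proposal is correct and follows essentially the same route as the paper: the forward direction uses invariance to show a dense orbit is absorbed into any non-empty open invariant set, and the converse intersects the dense open invariant sets $\ran\dom\inv(U_n)$ over a countable basis and applies the Baire category theorem, exactly as in the paper's proof (which writes these sets as $\ran\dom\inv(U_i)$ rather than $\dom\ran\inv(U_n)$, an immaterial difference by Proposition~\ref{p:open.inv}).
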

\begin{proof}
Assume that $\mathcal O_x$ is dense and let $U\neq \emptyset$ be open and invariant.  Then $U\cap \mathcal O_x\neq \emptyset$ and hence $\mathcal O_x\subseteq U$ by invariance of $U$.  Thus $U$ is dense and hence $\mathscr G$ is topologically transitive.

Suppose now that $\mathscr G$ is topologically transitive and $\mathscr G\skel 0$ is locally compact, Hausdorff and second countable.  Let $\{U_i\}_{\in \mathbb N}$ be a countable base for its topology.  Let $V_i = \ran\dom\inv(U_i)$ and note that $V_i$ is a non-empty, open invariant subset by Proposition~\ref{p:open.inv} and hence dense by topological transitivity.  Then $V=\bigcap_{i=0}^{\infty} V_i$ is dense by the Baire category theorem and  its complement $\mathscr G\skel 0\setminus V$ is meager.  Suppose that $x\in V$.  We claim that $\mathcal O_x$ is dense.  Indeed, $x\in V_i=\ran\dom\inv(U_i)$ and so there is an arrow $g\colon y\to x$ with $y\in U_i$.  Thus $\mathcal O_x\cap U_i\neq \emptyset$.  As the $U_i$ form a basis for the topology, we conclude that $\mathcal O_x$ is dense.
\end{proof}

I previously observed in~\cite{groupoidprimitive} that for non-Hausdorff groupoids, density often needs to be replaced by a more subtle notion that depends on the base commutative ring $R$.  We recall the definition. From now on $\mathscr G$ will be an ample groupoid and $R$ a commutative ring with unit.  A subset $X\subseteq \mathscr G\skel 0$ is said to be \emph{$R$-dense} if, for each $0\neq f\in R\mathscr G$, there is an element $g\in G$ with $f(g)\neq 0$ and $\dom(g)\in X$.  It is shown in~\cite[Prop.~4.2]{groupoidprimitive} that an $R$-dense set is dense and that the converse holds if $\mathscr G$ is Hausdorff.  Moreover, there are examples of dense sets that are not $R$-dense in the non-Hausdorff setting.%  In light of Proposition~\ref{p:top.trans}, it is natural to say that $\mathscr G$ is \emph{$R$-topologically transitive} if each non-empty open invariant subset of $\mathscr G\skel 0$ is $R$-dense.  An $R$-topologically transitive groupoid is then topologically transitive.  For Hausdorff groupoids, topological transitivity and $R$-topological transitivity coincide.

%\begin{Prop}\label{p:R-denseorbit}
%If $\mathscr G$ has an $R$-dense orbit, then it is $R$-topologically transitive.
%\end{Prop}
%\begin{proof}
%Let $\emptyset\neq U\subseteq \mathscr G\skel 0$ be invariant.  Suppose that $\mathcal O_x$ is $R$-dense.  Then, as it is dense, we must have that $\mathcal O_x\cap U\neq \emptyset$ and hence $\mathcal O_x\subseteq U$.  As $\mathcal O_x$ is $R$-dense, we conclude that $U$ is $R$-dense.
%\end{proof}

\section{Prime and semiprime \'etale groupoid algebras}
Recall that a ring $A$ is \emph{prime} if $IJ=0$ implies $I=0$ or $J=0$ for any ideals $I,J$ of $A$.  It is easy to see that this is equivalent to the condition that if $axb=0$ for all $x\in A$, then $a=0$ or $b=0$. A ring $A$ is \emph{semiprime} if $I^2=0$ implies $I=0$ for an ideal $I$ or, equivalently, $A$ contains no nilpotent ideals~\cite[Prop.~10.16]{LamBook}. At the level of elements, $A$ is semiprime if $axa=0$ for all $x\in A$ implies $a=0$.   Note that any semiprimitive ring $A$ (one with a trivial Jacobson radical) is semiprime.  A commutative ring with unit is prime if and only if it is an integral domain; it is semiprime if and only if it is reduced (i.e., has no nilpotent elements).  The following characterization of prime group rings is due to Connell, see~\cite[Chpt.~10, Sec.~4, Thm.~A]{LamBook}.

\begin{Thm}[Connell]\label{t:connell}
Let $R$ be a commutative ring with unit and $G$ a group.  Then the group algebra $RG$ is prime if and only if $R$ is an integral domain and $G$ has no non-trivial finite normal subgroups.
\end{Thm}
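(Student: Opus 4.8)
The plan is to prove the two implications separately, using throughout the elementwise criterion recalled just above the statement: a ring $A$ is prime exactly when $a\gamma b = 0$ for all $\gamma\in A$ forces $a=0$ or $b=0$. For the ``only if'' direction I would show that whenever a hypothesis fails, one can exhibit nonzero $a,b$ annihilating each other across all of $RG$. The base ring embeds as the central subring $R\cdot 1\subseteq RG$, so if $ab=0$ with $a,b\neq 0$ in $R$, then $(a\cdot 1)\gamma(b\cdot 1) = ab\gamma = 0$ for every $\gamma$, whence $RG$ is not prime; this forces $R$ to be a domain. If $N\neq 1$ is a finite normal subgroup, put $\widehat N = \sum_{n\in N} n$. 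Normality makes $\widehat N$ central (conjugation permutes $N$), and $\widehat N h = \widehat N$ for $h\in N$ gives $\widehat N(h-1)=0$. Taking $h\in N$ with $h\neq 1$, the nonzero elements $a=\widehat N$ and $b=h-1$ satisfy $a\gamma b = \gamma\,\widehat N(h-1) = 0$ for all $\gamma$ by centrality of $a$, contradicting primeness.

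The substance is the converse. Assume $R$ is a domain and $G$ has no nontrivial finite normal subgroup, fix $0\neq\alpha,\beta\in RG$, and aim to produce $\gamma$ with $\alpha\gamma\beta\neq 0$. The organizing tool is the FC-center $\Delta = \Delta(G) = \{g\mid [G:C_G(g)]<\infty\}$ of elements with finitely many conjugates, together with its torsion part $\Delta^+(G)$. The first reduction I would record is purely group-theoretic: $G$ has no nontrivial finite normal subgroup if and only if $\Delta^+(G)=1$. The forward implication is immediate, since a finite normal subgroup consists of torsion elements with finitely many conjugates and hence lies in $\Delta^+$. The reverse implication is Dietzmann's lemma: a finite conjugation-closed set of torsion elements generates a finite normal subgroup, applied to the (finite) conjugacy class of a hypothetical nonidentity element of $\Delta^+$.

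Granting $\Delta^+=1$, the standard structure theory of FC-groups gives that the commutator subgroup $[\Delta,\Delta]$ is torsion, hence contained in $\Delta^+=1$, so $\Delta$ is torsion-free abelian. Being orderable, a torsion-free abelian group has the property that a leading-term argument makes $R\Delta$ a commutative integral domain. The decisive remaining step is to transfer primeness from this ``diagonal'' subalgebra to $RG$: one regards $RG$ as assembled from $R\Delta$ via the conjugation action of $G$, and uses the $\Delta$-projection $\pi\colon RG\to R\Delta$ that deletes coefficients off $\Delta$. The key lemma is that $RG$ is prime precisely when $R\Delta$ is $G$-prime (no two nonzero $G$-invariant ideals multiply to zero); since a domain is automatically $G$-prime, the primeness of $RG$ follows, producing the required $\gamma$ for the given $\alpha,\beta$.

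I expect the main obstacle to be exactly this last transfer, i.e.\ the support bookkeeping behind the $\Delta$-methods: controlling how supports behave under multiplication and conjugation so that, for nonzero $\alpha,\beta$, a suitable translated and averaged product has nonzero image under $\pi$, which is then nonzero because $R\Delta$ is a domain. Everything preceding it---the elementwise criterion, Dietzmann's lemma, and orderability of torsion-free abelian groups---is routine; the genuine difficulty lies in establishing the equivalence of primeness of $RG$ with $G$-primeness of $R\Delta$ and thereby ruling out that any nonzero $\alpha,\beta$ annihilate one another across $RG$ once $\Delta^+=1$.
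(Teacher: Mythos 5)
You should first note that the paper does not prove this theorem at all: it is quoted as Connell's theorem with a citation to \cite[Chpt.~10, Sec.~4, Thm.~A]{LamBook}, so the only meaningful comparison is with the proof in that cited source --- and your outline is exactly that proof (Passman's $\Delta$-methods, as in \cite{LamBook,PassmanBook}, rather than Connell's original argument). Your necessity direction is complete and correct: $R$ embeds centrally, and for a finite normal subgroup $N\neq 1$ the elements $a=\widehat N=\sum_{n\in N}n$ (central, since conjugation permutes $N$) and $b=h-1$ with $1\neq h\in N$ satisfy $a\gamma b=0$ for all $\gamma\in RG$ while $a,b\neq 0$. Your group-theoretic reductions are also sound: Dietzmann's lemma gives the equivalence of ``no nontrivial finite normal subgroup'' with $\Delta^+(G)=1$; the torsion of $[\Delta,\Delta]$ (a B.~H.~Neumann/Schur-type fact about FC-groups) then makes $\Delta$ torsion-free abelian, and the orderability/leading-term argument makes $R\Delta$ a domain, hence $G$-prime.

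The genuine gap is the step you yourself flag: the equivalence of primeness of $RG$ with $G$-primeness of $R\Delta(G)$ is not ``support bookkeeping'' to be filled in --- it \emph{is} the hard content of the theorem, and you neither prove it nor reduce it to a precisely stated quotable lemma. What is actually needed is Passman's projection lemma: if $\alpha\, g\, \beta=0$ for all $g\in G$, then $\pi(\alpha)\pi(\beta)=0$, where $\pi\colon RG\to R\Delta$ deletes coefficients off $\Delta$. Its proof is genuinely delicate: one must show that for $\alpha,\beta$ with $\pi(\alpha)\pi(\beta)\neq 0$ there exists $g$ avoiding finitely many coset conditions coming from the supports, and this rests on B.~H.~Neumann's theorem that if a group is covered by finitely many cosets of subgroups, then the cosets of the finite-index subgroups among them already cover (here is where the elements of infinite conjugacy class, i.e.\ the part of the support outside $\Delta$, get killed). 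As written, your sufficiency direction is a correct roadmap through \cite{LamBook} but not a proof; given that the paper itself only quotes the theorem, citing Passman's lemma explicitly would be acceptable, but asserting ``the key lemma is...'' without proof or citation leaves the decisive step missing.
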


The anologue of Connell's result for semiprimeness is due to Passman, cf.~\cite[Chpt.~10, Sec.~4, Thm.~B]{LamBook}.

\begin{Thm}[Passman]\label{t:passman}
Let $R$ be a commutative ring with unit and $G$ a group.  Then the group algebra $RG$ is semiprime if and only if $R$ is reduced and the order of any finite normal subgroup of $G$ is not a zero-divisor in $R$.
\end{Thm}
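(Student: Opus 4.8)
The plan is to treat necessity by writing down two explicit nilpotent ideals, and to treat the (harder) sufficiency by transporting the whole question to the finite-conjugate center of $G$, where it reduces to an averaging computation inside the group ring of a finite normal subgroup. Throughout I use that elements of $R$ are central in $RG$ and that, for $N\trianglelefteq G$, the class sum $\widehat N=\sum_{g\in N}g$ is central.

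\emph{Necessity.} Suppose $RG$ is semiprime. If some $0\neq r\in R$ had $r^2=0$, then $rRG=RGr$ is a nonzero two-sided ideal with $(rRG)^2=r^2RG=0$, a contradiction; hence $R$ is reduced. Now suppose $N\trianglelefteq G$ is finite with $|N|=n$ and that $rn=0$ for some $0\neq r\in R$. A direct count gives $\widehat N^2=n\widehat N$, so the central element $r\widehat N$---nonzero, since its coefficient at each point of $N$ is $r$---satisfies $(r\widehat N)^2=r^2n\widehat N=r(rn)\widehat N=0$. Then $r\widehat N\cdot RG$ is a nonzero ideal of square zero, again contradicting semiprimeness. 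Thus no order of a finite normal subgroup is a zero-divisor.

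\emph{Sufficiency.} Assume $R$ is reduced and that $|N|$ is a non-zero-divisor in $R$ for every finite normal subgroup $N\trianglelefteq G$. Write $\Delta^+=\Delta^+(G)$ for the set of torsion elements of the FC-center $\Delta(G)=\{g\in G:[G:C_G(g)]<\infty\}$. By Dietzmann's lemma the normal closure of a single element of $\Delta^+$ is a finite normal subgroup of $G$, so $\Delta^+$ is a characteristic subgroup that is the directed union of the finite normal subgroups of $G$. The core step is the reduction: if $RG$ is not semiprime, then neither is $R\Delta^+$; concretely, from a nonzero ideal $I$ with $I^2=0$ one extracts a nonzero $\alpha\in R\Delta^+$ with $\alpha\,R\Delta^+\,\alpha=0$. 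This is carried out with the projection $\pi\colon RG\to R\Delta$ onto the $\Delta$-supported part, together with the trace form sending $(\alpha,\beta)$ to the coefficient of $1$ in $\alpha\beta$, controlling two-sided annihilators via the finite-conjugate structure.

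Granting that reduction, the endgame is elementary. Such an $\alpha$ has finite support in $\Delta^+$, and the join of the (finitely many) normal closures of its support is a single finite normal subgroup $N\trianglelefteq G$ with $\alpha\in RN$. Then $\alpha RN\alpha\subseteq \alpha R\Delta^+\alpha=0$, so $RN\alpha RN$ is a nonzero ideal of $RN$ of square zero and $RN$ is not semiprime. But for the finite normal subgroup $N$ the averaging identity $\widehat N^2=|N|\widehat N$ shows that semiprimeness of $RN$ follows from $R$ being reduced and $|N|$ being a non-zero-divisor (the trace form on $RN$ being nondegenerate modulo the obstruction measured by $|N|$); this contradicts the hypothesis on $N$. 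Hence $RG$ is semiprime. The main obstacle is precisely the reduction to $\Delta^+$: detecting the prime radical of $RG$ on the finite-conjugate center is the substance of Passman's $\Delta$-methods, and here it must moreover be run over a reduced coefficient ring, tracking zero-divisors of $R$ in place of the characteristic dichotomy available over a field. This mirrors the proof of Connell's Theorem~\ref{t:connell}, with the torsion part $\Delta^+$ playing the role that nontrivial finite normal subgroups play there.
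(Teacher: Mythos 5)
First, a point of reference: the paper does not prove this theorem at all; it is quoted as a known result of Passman with a citation to \cite[Chpt.~10, Sec.~4, Thm.~B]{LamBook}, so your proposal has to be judged as a standalone proof rather than against an in-paper argument. Your necessity half meets that standard: $R$ reduced via the square-zero ideal $rRG$, and the computation $\widehat N^2=|N|\widehat N$ with $\widehat N$ central (normality of $N$) giving $(r\widehat N)^2=r(r|N|)\widehat N=0$, is complete and correct.

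The sufficiency half, however, contains a genuine gap that you yourself flag with the phrase ``Granting that reduction.'' The claim that a nonzero square-zero ideal $I\subseteq RG$ yields a nonzero $\alpha\in R\Delta^+$ with $\alpha\,R\Delta^+\,\alpha=0$ is precisely the hard content of Passman's theorem --- it is the $\Delta$-methods machinery (the projection $\pi\colon RG\to R\Delta$, the lemma that $\alpha\,RG\,\beta=0$ forces $\pi(\alpha)\,R\Delta\,\pi(\beta)=0$, and then the descent from $\Delta$ to its torsion subgroup $\Delta^+$, which uses that $\Delta/\Delta^+$ is torsion-free abelian). Naming the tools (``the trace form,'' ``controlling two-sided annihilators via the finite-conjugate structure'') is a correct roadmap but not an argument; everything before and after this step is routine, so the proposal in effect assumes the theorem's substance. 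A smaller, fixable omission: the finite base case is also only gestured at. It does have a short honest proof you could include: if $I\subseteq RN$ is a nilpotent ideal and $\alpha\in I$, then for each $g\in N$ the element $\alpha g^{-1}\in I$ is nilpotent, so its image under the regular representation $RN\hookrightarrow M_{|N|}(R)$ is a nilpotent matrix, whose trace $|N|\alpha_g$ is then nilpotent in $R$, hence zero since $R$ is reduced; as $|N|$ is a non-zero-divisor, $\alpha_g=0$ for all $g$, so $I=0$. (Your observation that $|M|$ divides $|N|$ for $M\leq N$, so non-zero-divisibility passes to subgroups, is fine and needed if you instead invoke the finite case of the full theorem --- but that route is circular unless the finite case is proved independently, as above.) As it stands, the proposal is a correct outline of Passman's proof with its central lemma left unproved.
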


Our first result shows that topological transitivity is a necessary condition for an \'etale groupoid algebra to be prime.

\begin{Prop}\label{p:need.trans}
Let $R$ be a commutative ring with unit and let $\mathscr G$ be an ample groupoid.  If $R\mathscr G$ is prime, then $R$ is an integral domain and $\mathscr G$ is topologically transitive.
\end{Prop}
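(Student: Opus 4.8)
The plan is to establish the two conclusions separately, each by contraposition, in both cases producing nonzero ideals with zero product in order to contradict primeness.

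First I would dispose of the claim that $R$ is an integral domain. Since $R\mathscr G$ is prime it is nonzero, so $\mathscr G\skel 0\neq\emptyset$ and, as $\mathscr G$ is ample, there is a nonempty compact open $U\subseteq \mathscr G\skel 0$; the idempotent $\chi_U$ is then a nonzero element of $R\mathscr G$. Suppose $R$ were not a domain, say $ab=0$ with $a,b\in R$ both nonzero. Because $R$ is central in the $R$-algebra $R\mathscr G$, for every $x\in R\mathscr G$ we have $(a\chi_U)\ast x\ast(b\chi_U)=ab\,(\chi_U\ast x\ast \chi_U)=0$, while $a\chi_U$ and $b\chi_U$ are nonzero (they are the functions taking the nonzero values $a,b$ on $U$). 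By the element-wise characterization of primeness recalled above, this forces $a\chi_U=0$ or $b\chi_U=0$, a contradiction. Hence $R$ is an integral domain.

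For topological transitivity I would argue against condition (1) of Proposition~\ref{p:top.trans}. Suppose $\mathscr G$ is not topologically transitive, so there are nonempty open invariant subsets $X,Y\subseteq\mathscr G\skel 0$ with $X\cap Y=\emptyset$. For an open invariant set $Z$ define
\[I_Z=\{f\in R\mathscr G: \dom(g)\in Z \text{ whenever } f(g)\neq 0\}.\]
The key structural observation is that invariance of $Z$ makes the conditions $\dom(g)\in Z$ and $\ran(g)\in Z$ equivalent for every arrow $g$, since $\dom(g)$ and $\ran(g)$ lie in a common orbit. Using this, a direct inspection of the convolution formula shows that $I_Z$ is a two-sided ideal: if $(f\ast\psi)(g)\neq 0$ or $(\psi\ast f)(g)\neq 0$ for some $f\in I_Z$, then a nonzero summand forces $\dom(g)\in Z$ (tracking that $\dom(gh\inv)=\ran(h)$ and $\dom(h)=\dom(g)$ in the two cases). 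Moreover $I_Z\neq 0$, because ampleness provides a nonempty compact open $U\subseteq Z$ and then $\chi_U\in I_Z$.

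It then remains to check that $I_X\ast I_Y=0$. Given $f\in I_X$ and $h\in I_Y$, in the expansion $(f\ast h)(g)=\sum_{\dom(k)=\dom(g)}f(gk\inv)h(k)$ any nonzero summand would require $h(k)\neq 0$, whence $\dom(k)\in Y$, and $f(gk\inv)\neq 0$, whence $\dom(gk\inv)=\ran(k)\in X$ and therefore $\dom(k)\in X$ by invariance; this puts $\dom(k)\in X\cap Y=\emptyset$, which is impossible. Thus $I_X\ast I_Y=0$ with $I_X,I_Y$ nonzero ideals, contradicting primeness, and so $\mathscr G$ is topologically transitive. I expect the only delicate point to be the convolution bookkeeping — in particular keeping the source and range memberships straight and invoking invariance at exactly the right moment — rather than any conceptual difficulty; everything else is a routine verification.
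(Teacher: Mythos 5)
Your proof is correct, and while your treatment of the integral-domain claim coincides with the paper's (same elements $a\chi_U$, $b\chi_U$, centrality of $R$, and the element-wise primeness criterion), your argument for topological transitivity takes a genuinely different, ideal-theoretic route. The paper argues directly: given nonempty open invariant $U,V\subseteq\mathscr G\skel 0$, it picks compact open $K\subseteq U$, $K'\subseteq V$, uses primeness to produce $h$ with $\chi_K\ast h\ast\chi_{K'}\neq 0$, and factors an arrow $\alpha=\alpha_1\alpha_2\alpha_3$ in the support to exhibit $\alpha_2\in\ran\inv(U)\cap\dom\inv(V)$, verifying Proposition~\ref{p:top.trans} without ever constructing an ideal. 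You instead argue by contraposition, attaching to each open invariant $Z$ the two-sided ideal $I_Z$ of functions supported on $\dom\inv(Z)=\ran\inv(Z)$ and showing that disjoint nonempty open invariant sets yield nonzero ideals with $I_X\ast I_Y=0$. Your convolution bookkeeping is right in all three places it matters: the left-ideal check via $\dom(h)=\dom(g)$, the right-ideal check via $\dom(gh\inv)=\ran(h)$ followed by invariance, and orthogonality via $\ran(k)\in X$ forcing $\dom(k)\in X\cap Y=\emptyset$. What your route buys is the reusable structural fact that open invariant subsets of $\mathscr G\skel 0$ induce two-sided ideals of $R\mathscr G$ --- the standard first step in analyzing the ideal lattice of Steinberg algebras --- at the cost of three support verifications; the paper's route is shorter and stays entirely at the level of elements and characteristic functions. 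One cosmetic remark: your opening inference that prime implies nonzero relies on the usual convention excluding the zero ring, which the paper also uses implicitly when it picks $\emptyset\neq K\subseteq\mathscr G\skel 0$ compact open.
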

\begin{proof}
Suppose that $R\mathscr G$ is prime and let $a,b\in R$ with $ab=0$.  Let $\emptyset\neq K\subseteq \mathscr G\skel 0$ be compact open. Then $a\chi_K\ast f\ast b\chi_K= ab(\chi_K\ast f\ast \chi_K)=0$ for any $f\in R\mathscr G$.  Thus $a\chi_K=0$ or $b\chi_K=0$ and so we conclude that $a=0$ or $b=0$.  Thus $R$ is an integral domain.

Fix $\emptyset\neq U,V\subseteq \mathscr G\skel 0$ open invariant subsets. Let $\emptyset\neq K\subseteq U$ and $\emptyset\neq K'\subseteq V$ be compact open.   Then we can find $h\in R\mathscr G$ such that $\chi_K\ast h\ast \chi_{K'}\neq 0$.  If $\alpha\in \mathscr G\skel 1$ with $\chi_K\ast h\ast \chi_{K'}(\alpha)\neq 0$, then $\alpha = \alpha_1\alpha_2\alpha_3$ with $\chi_K(\alpha_1)h(\alpha_2)\chi_{K'}(\alpha_3)\neq 0$.  But then $\alpha_1\in K$, $\alpha_3\in K'$ and so $\alpha_2\in \ran\inv(K)\cap \dom\inv(K')\subseteq \ran\inv(U)\cap \dom\inv(V)$. Thus $\mathscr G$ is topologically transitive by Proposition~\ref{p:top.trans}.
\end{proof}

Another necessary condition for primeness comes from the group algebras at isolated points of $\mathscr G\skel 0$.  First we recall that if $A$ is a prime ring, then each corner $eAe$, with $e\neq 0$ an idempotent, is prime.  Indeed, if $a,b\in eAe\setminus \{0\}$ and $axb\neq 0$ with $x\in A$, then $a(exe)b=axb\neq 0$ and $exe\in eAe$.  Similarly, if $A$ is semiprime, then $eAe$ is semiprime.

\begin{Prop}\label{p:isolated}
Let $R\mathscr G$ be a (semi)prime ring.  Then $RG_x$ is a (semi)prime  ring for each isolated point $x\in \mathscr G\skel 0$.
\end{Prop}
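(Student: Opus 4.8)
The plan is to realize $RG_x$ as a corner $eR\mathscr G e$ of $R\mathscr G$ for a suitable idempotent $e$, and then invoke the fact recalled just before the statement that every corner $eAe$ (with $e\neq 0$ idempotent) of a (semi)prime ring $A$ is again (semi)prime.

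First I would produce the idempotent. Since $x$ is isolated and $\mathscr G\skel 0$ is locally compact Hausdorff, the singleton $\{x\}$ is an open subset of $\mathscr G\skel 0$, and being a single point it is compact; thus $\{x\}$ is a compact open subset of the unit space, hence an idempotent of $\Bis_c(\mathscr G)$. Consequently $e:=\chi_{\{x\}}$ is a nonzero idempotent of $R\mathscr G$.

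Next I would identify the corner. Using $\chi_U\ast\chi_V=\chi_{UV}$, for a compact local bisection $U$ one computes $\chi_{\{x\}}\ast\chi_U\ast\chi_{\{x\}}=\chi_{\{x\}U\{x\}}$, and a direct check of the bisection product gives $\{x\}U\{x\}=U\cap G_x$. Because $U$ is a bisection, $\dom|_U$ is injective, so $U$ contains at most one arrow with domain $x$; hence $U\cap G_x$ is either empty or a single arrow $g\in G_x$. Moreover, for any $g\in G_x$, choosing a bisection $U$ with $g\in U$ we have $\{g\}=U\cap\dom^{-1}(\{x\})$, which is open, and a singleton is compact, so $\chi_{\{g\}}\in R\mathscr G$. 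Since $R\mathscr G$ is spanned by the $\chi_U$ with $U\in\Bis_c(\mathscr G)$, it follows that $eR\mathscr G e$ is the $R$-span of $\{\chi_{\{g\}}\mid g\in G_x\}$ (and each such $\chi_{\{g\}}$ does lie in the corner, as $\chi_{\{x\}}\ast\chi_{\{g\}}\ast\chi_{\{x\}}=\chi_{\{g\}}$ using $\ran(g)=x=\dom(g)$).

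Finally I would check that $g\mapsto\chi_{\{g\}}$ defines an $R$-algebra isomorphism $RG_x\to eR\mathscr G e$: linear independence is clear since the singletons $\{g\}$ are distinct; multiplicativity follows from $\chi_{\{g\}}\ast\chi_{\{h\}}=\chi_{\{g\}\{h\}}=\chi_{\{gh\}}$ for $g,h\in G_x$ (here $\dom(g)=x=\ran(h)$); and the identity $1_x=x$ of $G_x$ maps to $\chi_{\{x\}}=e$. Then primeness (resp. semiprimeness) of $RG_x\cong eR\mathscr G e$ is immediate from the corner fact. The main point requiring care is the geometric step that each $\{g\}$ with $g\in G_x$ is compact open and that $U\cap G_x$ is at most a single point—both resting on $U$ being a bisection together with the isolatedness of $x$.
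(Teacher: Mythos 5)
Your proof is correct and follows essentially the same route as the paper: the paper's entire proof is to observe that $e=\chi_{\{x\}}$ is a nonzero idempotent with $eR\mathscr G e\cong RG_x$ (citing Prop.~4.7 of the primitivity paper for that isomorphism) and then apply the corner fact recalled just before the statement. The only difference is that you verify the corner isomorphism directly—correctly, including the key points that $\{x\}U\{x\}=U\cap G_x$ is at most a singleton and that each $\{g\}$ with $g\in G_x$ is compact open—where the paper cites it.
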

\begin{proof}
Observe that $0\neq e=\chi_{\{x\}}$ is an idempotent and $eR\mathscr Ge\cong RG_x$ (cf.~\cite[Prop.~4.7]{groupoidprimitive}).
\end{proof}

Next we characterize primeness for effective Hausdorff groupoids.

\begin{Thm}\label{t:effective.case}
Let $\mathscr G$ be an effective Hausdorff ample groupoid and $R$ a commutative ring with unit.  Then $R\mathscr G$ is prime if and only if $R$ is an integral domain and $\mathscr G$ is topologically transitive.
\end{Thm}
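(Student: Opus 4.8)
The forward implication needs nothing new: Proposition~\ref{p:need.trans} already shows that if $R\mathscr G$ is prime then $R$ is an integral domain and $\mathscr G$ is topologically transitive, with no use of effectiveness or the Hausdorff hypothesis. So the whole content is the converse, which I would establish through the elementwise characterization of primeness recalled at the start of the section: it suffices to show that for all nonzero $f,h\in R\mathscr G$ there is some $g\in R\mathscr G$ with $f\ast g\ast h\neq 0$.

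The engine of the proof is a diagonalization step that isolates effectiveness: \emph{for every nonzero $f\in R\mathscr G$ there are compact open bisections $U,V$ such that $\chi_U\ast f\ast\chi_V$ is a nonzero element supported entirely on $\mathscr G\skel 0$.} Since $\mathscr G\skel 1$ is Hausdorff, $f$ is a finite $R$-combination $\sum_i c_i\chi_{B_i}$ of disjoint compact open bisections, and because $\mathscr G\skel 0$ is clopen in $\mathscr G\skel 1$ I may split each $B_i$ so that it is either contained in or disjoint from $\mathscr G\skel 0$. Choosing a piece $B_1$ on which $f$ takes a nonzero value $c$ and left-multiplying by $\chi_{B_1^*}$, a direct convolution computation gives $\chi_{B_1^*}\ast\chi_{B_1}=\chi_{\dom B_1}$ and shows that the restriction of $\chi_{B_1^*}\ast f$ to $\mathscr G\skel 0$ equals $c\,\chi_{\dom B_1}$, while its remaining (off-diagonal) part is a finite sum $\sum_j e_j\chi_{C_j}$ with each $C_j$ a compact open bisection disjoint from $\mathscr G\skel 0$ (here one uses that $b_1\inv b_i$ is never a unit when $b_1\in B_1$, $b_i\in B_i$ with $i\neq 1$). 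If I can find a nonempty compact open $K\subseteq\dom B_1$ with $C_j\cdot K\cap K=\emptyset$ for every $j$, then cutting down by $\chi_K$ on both sides annihilates the off-diagonal remainder and leaves $\chi_K\ast\chi_{B_1^*}\ast f\ast\chi_K=c\,\chi_K$; taking $U=KB_1^*$ and $V=K$ proves the sub-lemma.

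Producing this $K$ is the main obstacle, and it is exactly where effectiveness is used. For a compact open bisection $C$ disjoint from $\mathscr G\skel 0$, the fixed-point set $F_C=\{x\in\dom C\mid C\cdot x=x\}$ must have empty interior: any open set inside $F_C$ would lift through $\dom|_C$ to a nonempty open subset of $C\subseteq\mathscr G\skel 1$ lying in $\Is(\mathscr G)\skel 1$ but outside $\mathscr G\skel 0$, contradicting $\mathscr G\skel 0=\mathrm{Int}(\Is(\mathscr G)\skel 1)$. As $\dom C$ is compact (hence closed) and $F_C$ is closed in it, each $F_{C_j}$ is closed and nowhere dense in $\mathscr G\skel 0$, so the finite union $\bigcup_j F_{C_j}$ is nowhere dense and cannot contain the nonempty open set $\dom B_1$. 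Picking $x_0\in\dom B_1\setminus\bigcup_j F_{C_j}$, for each $j$ either $x_0\notin\dom C_j$ or $C_j\cdot x_0\neq x_0$; in either case the Hausdorff property together with the compact-open basis of $\mathscr G\skel 0$ lets me shrink to a compact open neighborhood, and intersecting over the finitely many $j$ yields the required $K\ni x_0$ inside $\dom B_1$ with $C_j\cdot K\cap K=\emptyset$ for all $j$.

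With the sub-lemma in hand I would finish using topological transitivity. Given nonzero $f,h$, write $a=\chi_U\ast f\ast\chi_V$ and $b=\chi_{U'}\ast h\ast\chi_{V'}$, both nonzero and supported on $\mathscr G\skel 0$, so that $\supp a$ and $\supp b$ are nonempty open subsets of $\mathscr G\skel 0$. For any compact open bisection $W$ a short computation gives $(a\ast\chi_W\ast b)(\eta)=a(\ran\eta)\,b(\dom\eta)$ for $\eta\in W$ and $0$ otherwise, so there is no cancellation. By condition~(4) of Proposition~\ref{p:top.trans} applied to $\supp b$ and $\supp a$ there is an arrow $\gamma$ with $\dom\gamma\in\supp b$ and $\ran\gamma\in\supp a$; choosing $W\ni\gamma$ with $\dom W\subseteq\supp b$ and $\ran W\subseteq\supp a$, the value $(a\ast\chi_W\ast b)(\gamma)=a(\ran\gamma)\,b(\dom\gamma)$ is a product of two nonzero elements of the integral domain $R$, hence nonzero. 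Therefore $\chi_U\ast f\ast(\chi_V\ast\chi_W\ast\chi_{U'})\ast h\ast\chi_{V'}=a\ast\chi_W\ast b\neq 0$, which forces $f\ast g\ast h\neq 0$ for $g=\chi_V\ast\chi_W\ast\chi_{U'}$, establishing primeness.
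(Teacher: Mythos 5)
Your proof is correct, and its skeleton coincides with the paper's: necessity via Proposition~\ref{p:need.trans}, and for sufficiency a reduction to elements supported on the unit space followed by a transitivity-plus-integral-domain endgame. The one genuine difference is how the effectiveness hypothesis is consumed. The paper's sufficiency argument is three lines because it works at the level of ideals and simply cites \cite[Prop.~3.3]{groupoidprimitive}: for $\mathscr G$ Hausdorff, ample and effective, every non-zero ideal of $R\mathscr G$ contains an element $a\chi_U$ with $a\in R\setminus\{0\}$ and $\emptyset\neq U\subseteq \mathscr G\skel 0$ compact open; it then picks such elements in $I$ and $J$ and computes $a\chi_U\ast\chi_K\ast b\chi_V=ab\chi_{UKV}\neq 0$ for a compact bisection $K$ through an arrow supplied by Proposition~\ref{p:top.trans}(4). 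Your sub-lemma --- for every $0\neq f$ there exist $U,V\in\Bis_c(\mathscr G)$ with $\chi_U\ast f\ast\chi_V$ non-zero and supported on $\mathscr G\skel 0$ --- is precisely the element-level content of that citation, and your proof of it (disjointification of $f$ into compact open bisections using Hausdorffness, cutting by $\chi_{B_1^*}$, then annihilating the off-diagonal terms by shrinking to a compact open $K$ avoiding the closed, nowhere dense fixed-point sets $F_{C_j}$, which is exactly where effectiveness enters) is the standard ``uniqueness theorem'' argument underlying that proposition. Your endgame is then the paper's computation rephrased through the identity $(a\ast\chi_W\ast b)(\eta)=a(\ran\eta)\,b(\dom\eta)$ for $\eta\in W$, with the ideal-versus-element formulations matched by the elementwise characterization of primeness recalled at the start of the section. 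The details you needed all check out: $\mathscr G\skel 0$ is clopen since $\mathscr G\skel 1$ is Hausdorff, $B_1^*B_i\cap\mathscr G\skel 0=\emptyset$ because $b_1\inv b_i$ is a unit only if $b_1=b_i$, each $F_{C_j}$ is closed with empty interior by effectiveness, and $KC_jK=\emptyset$ follows from $C_j\cdot K\cap K=\emptyset$. In short: same route in substance; your version buys self-containedness (and an explicit element-level cut-down lemma that is reusable, e.g., it immediately gives Theorem~\ref{t:semiprime.effective} as well), at the cost of reproving a known proposition, while the paper buys brevity by citation.
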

\begin{proof}
The necessity of the conditions follows from Proposition~\ref{p:need.trans}.  For the sufficiency, let $I,J\subseteq R\mathscr G$ be non-zero ideals.  Then, by~\cite[Prop.~3.3]{groupoidprimitive}, there exist $a,b\in R\setminus \{0\}$ and $\emptyset\neq U,V\subseteq \mathscr G\skel 0$ compact open such that $a\chi_U\in I$ and $b\chi_V\in J$.  By Proposition~\ref{p:top.trans}, there is an arrow  $g\in \ran\inv (U)\cap \dom\inv(V)$. Let $K\in \Bis_c(\mathscr G)$ with $g\in K$.  Then $a\chi_U\ast \chi_K\ast b\chi_V = ab\chi_{UKV}\neq 0$ as $R$  an integral domain  implies $ab\neq 0$ and $g\in UKV$.  Thus $IJ\neq 0$ and hence $R\mathscr G$ is prime.
\end{proof}

It was shown in~\cite{groupoidprimitive} that if $\Bbbk$ is a field and $\mathscr G$ is an effective Hausdorff groupoid, then $\Bbbk \mathscr G$ is primitive (i.e., has a faithful irreducible representation) if and only if $\mathscr G$ has a dense orbit.  Primitivity is a stronger notion than primeness, but in light of Theorem~\ref{t:effective.case} and the result just mentioned, we see that they are equivalent for second countable effective Hausdorff groupoids over a field.

\begin{Cor}\label{c:primevsprim}
Let $\mathscr G$ be an effective Hausdorff ample groupoid with $\mathscr G\skel 0$ second countable and $\Bbbk$ a field.  Then $\Bbbk \mathscr G$ is prime if and only if it is primitive.
\end{Cor}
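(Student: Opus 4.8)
The plan is to prove the two implications separately, chaining together the results already in place rather than building anything new. The implication that primitivity implies primeness needs none of the groupoid-specific machinery: as recalled at the start of the section on prime and semiprime rings, any primitive ring is prime, since a faithful irreducible module exhibits $0$ as a prime ideal. So all the content sits in the converse.

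For the converse, suppose $\Bbbk\mathscr G$ is prime. First I would invoke Theorem~\ref{t:effective.case}. Since $\mathscr G$ is an effective Hausdorff ample groupoid, primeness of $\Bbbk\mathscr G$ forces $\Bbbk$ to be an integral domain (automatic here, as $\Bbbk$ is a field) and $\mathscr G$ to be topologically transitive. Next I would promote topological transitivity to the existence of a dense orbit by means of Lemma~\ref{l:dense.orbit}. This is exactly where the standing hypotheses earn their keep: the unit space $\mathscr G\skel 0$ of an ample groupoid is locally compact Hausdorff by definition, and it is second countable by assumption, so the converse half of Lemma~\ref{l:dense.orbit} applies and produces a point $x$ with $\mathcal O_x$ dense (in fact a co-meager set of such points).

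Finally I would feed the dense orbit back into the primitivity criterion from~\cite{groupoidprimitive} quoted just above the corollary: for an effective Hausdorff groupoid over a field, having a dense orbit is equivalent to primitivity of the algebra. Hence $\Bbbk\mathscr G$ is primitive, and the converse is complete.

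As for obstacles, there is essentially no new mathematics to generate; the whole argument is a bookkeeping exercise in aligning the hypotheses so that each cited result applies verbatim. The only point demanding care is the role of second countability: it enters in precisely one place, namely the nontrivial direction of Lemma~\ref{l:dense.orbit}, and it is exactly what separates primeness from primitivity in general. Without it, topological transitivity need not yield a dense orbit, so the equivalence can break down. I would therefore be explicit that second countability is used solely to pass from topological transitivity to a dense orbit, and nowhere else in the chain.
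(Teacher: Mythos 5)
Your proposal is correct and matches the paper's argument: both route the equivalence through Theorem~\ref{t:effective.case} (prime $\Leftrightarrow$ topologically transitive), the primitivity criterion of~\cite[Thm.~4.10]{groupoidprimitive} (primitive $\Leftrightarrow$ dense orbit), and Lemma~\ref{l:dense.orbit} with second countability bridging the two, which is exactly where the hypothesis is spent. Your only cosmetic deviation is dispatching the forward implication via the general fact that primitive rings are prime, where the paper simply runs the same chain of equivalences in both directions.
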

\begin{proof}
By Theorem~\ref{t:effective.case}, $\Bbbk \mathscr G$ is prime if and only if $\mathscr G$ is topologically transitive and by~\cite[Thm.~4.10]{groupoidprimitive} $\Bbbk \mathscr G$ is primitive if and only if $\mathscr G$ has a dense orbit. But when $\mathscr G\skel 0$ is second countable, these are equivalent conditions by Lemma~\ref{l:dense.orbit}.
\end{proof}

An action of a discrete group $G$ on a locally compact Hausdorff space $X$ is said to be \emph{topologically free} if the fixed point set of each non-identity element of $G$ is nowhere dense.  The action groupoid $G\ltimes X$ is well known to be effective if and only if the action is topologically free, cf.~\cite[Prop.~5.6]{groupoidprimitive}. Let $C_c(X,R)$ be the ring of compactly supported, locally constant functions from $X$ to $R$ equipped with the pointwise operations; so $C_c(X,R)$ is the algebra of $X$ viewed as a groupoid of identity morphisms.  Then it is well known that the crossed product algebra $C_c(X,R)\rtimes G$ is the algebra $R[G\ltimes X]$.  Thus Theorem~\ref{t:effective.case}, Corollary~\ref{c:primevsprim} and~\cite[Thm.~4.10]{groupoidprimitive} have the following corollary.

\begin{Cor}
Let $G$ be a discrete group acting on a Hausdorff space $X$ with a basis of compact open sets.  Assume that the action of $G$ is topologically free and let $R$ be a commutative ring with unit. Then  $C_c(X,R)\rtimes G$ is prime if and only if $R$ is an integral domain and the action of $G$ on $X$ is topologically transitive.  If $R$ is a field, then $C_c(X,R)\rtimes G$ is primitive if and only if $G$ has a dense orbit on $X$.  If $X$ is second countable and $R$ is a field, then $C_c(X,R)\rtimes G$ is  prime if and only if it is primitive.
\end{Cor}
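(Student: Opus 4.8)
The strategy is to recognize this corollary as a direct specialization of the three results cited immediately before it, once we translate the hypotheses on the group action into the groupoid-theoretic language of $\mathscr G = G \ltimes X$. The key identification, which the paper has already noted, is that $C_c(X,R)\rtimes G \cong R[G\ltimes X]$, so every assertion about the crossed product becomes an assertion about the ample groupoid algebra $R\mathscr G$.

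First I would verify that $\mathscr G = G\ltimes X$ is an ample Hausdorff groupoid. The unit space is $X$, which is assumed Hausdorff with a basis of compact open sets, hence locally compact Hausdorff and ample at the level of units. Since $G$ is discrete and acts by homeomorphisms, the groupoid of germs has no nontrivial identifications and $\mathscr G\skel 1 \cong G\times X$ as a topological space (via the basic open sets $(g,U)$), which is Hausdorff; thus $\mathscr G$ is an Hausdorff ample groupoid. Next, the hypothesis that the $G$-action is topologically free is, by the cited \cite[Prop.~5.6]{groupoidprimitive}, exactly the condition that $\mathscr G$ is effective. This places us squarely in the setting of Theorem~\ref{t:effective.case}.

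With these translations in hand, the three claims follow by citation. For the first, Theorem~\ref{t:effective.case} gives that $R\mathscr G$ is prime if and only if $R$ is an integral domain and $\mathscr G$ is topologically transitive; and by the remark following Proposition~\ref{p:top.trans}, topological transitivity of $G\ltimes X$ is equivalent to topological transitivity of the $G$-action on $X$ in the usual dynamical sense. For the second, when $R$ is a field we invoke \cite[Thm.~4.10]{groupoidprimitive}, which says $R\mathscr G$ is primitive if and only if $\mathscr G$ has a dense orbit; a dense orbit of $G\ltimes X$ is precisely a dense $G$-orbit on $X$. For the third, assuming in addition that $X$ is second countable, Corollary~\ref{c:primevsprim} yields that primeness and primitivity of $\Bbbk\mathscr G$ coincide, giving the final equivalence.

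The only real content beyond citation is the unwinding of the groupoid-of-germs construction for a genuine group action, so that ``topologically transitive groupoid,'' ``dense orbit of the groupoid,'' and ``effective'' reduce to their classical dynamical counterparts; all of these reductions are either stated in the excerpt or immediate from the fact that $G$ is a group, so no identifications occur in $G\ltimes X$. I do not anticipate a genuine obstacle here: the corollary is essentially bookkeeping, and the main task is to state the dictionary between the crossed-product/dynamical language and the groupoid language cleanly enough that each of the three biconditionals is seen to be an instance of an already-proved result.
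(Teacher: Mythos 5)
Your proposal is correct and matches the paper's own treatment exactly: the paper likewise derives this corollary by citing Theorem~\ref{t:effective.case}, Corollary~\ref{c:primevsprim} and \cite[Thm.~4.10]{groupoidprimitive}, after noting that $C_c(X,R)\rtimes G \cong R[G\ltimes X]$, that topological freeness is equivalent to effectiveness of $G\ltimes X$ by \cite[Prop.~5.6]{groupoidprimitive}, and that topological transitivity and dense orbits of the groupoid coincide with their classical dynamical counterparts. Your extra verification that $G\ltimes X$ is Hausdorff and ample (no identifications of germs since $G$ is a group) is exactly the bookkeeping the paper leaves implicit.
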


Note that it was shown in~\cite{groupoidprimitive} that if $\mathscr G$ is Hausdorff and effective, and $R$ is semiprimitve, then $R\mathscr G$ is semiprimitive.  The corresponding result is also true for semiprimeness.

\begin{Thm}\label{t:semiprime.effective}
Let $R$ be a commutative ring with unit and $\mathscr G$ an effective Hausdorff ample groupoid. Then $R\mathscr G$ is semiprime if and only if $R$ is reduced.
\end{Thm}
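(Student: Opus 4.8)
The plan is to prove the two implications separately, following the template of Proposition~\ref{p:need.trans} and Theorem~\ref{t:effective.case}, but to observe that, because semiprimeness is a weaker condition than primeness, no topological transitivity hypothesis is needed and the argument is correspondingly shorter. The necessity direction is a direct element-level computation, while the sufficiency direction rests on the structural input~\cite[Prop.~3.3]{groupoidprimitive} together with the idempotency of the functions $\chi_U$ for $U\subseteq \mathscr G\skel 0$ compact open.

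First I would prove necessity: if $R\mathscr G$ is semiprime, then $R$ is reduced. Suppose $a\in R$ with $a^2=0$ and fix any non-empty compact open $K\subseteq \mathscr G\skel 0$. For every $f\in R\mathscr G$ the $R$-bilinearity of convolution gives $a\chi_K\ast f\ast a\chi_K = a^2(\chi_K\ast f\ast \chi_K)=0$. By the element-level characterization of semiprimeness recalled above, this forces $a\chi_K=0$, and evaluating at any point of $K$ yields $a=0$. Hence $R$ has no non-zero nilpotents, exactly paralleling the way $R$ is shown to be an integral domain in Proposition~\ref{p:need.trans}.

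For sufficiency, assume $R$ is reduced; I would show $R\mathscr G$ has no non-zero ideal $I$ with $I^2=0$, which is equivalent to semiprimeness. Let $I\neq 0$ be an ideal. By~\cite[Prop.~3.3]{groupoidprimitive} there exist $a\in R\setminus\{0\}$ and a non-empty compact open $U\subseteq \mathscr G\skel 0$ with $a\chi_U\in I$. Since $U$ consists of units, $UU=U$ in $\Bis_c(\mathscr G)$, so $\chi_U\ast\chi_U=\chi_{UU}=\chi_U$ and therefore $a\chi_U\ast a\chi_U = a^2\chi_U\in I^2$. Because $R$ is reduced and $a\neq 0$ we have $a^2\neq 0$, whence $a^2\chi_U\neq 0$ and $I^2\neq 0$. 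Thus no non-zero ideal squares to zero and $R\mathscr G$ is semiprime.

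The only genuinely non-trivial ingredient is the cited fact that every non-zero ideal of $R\mathscr G$ contains a non-zero element of the form $a\chi_U$ with $U\subseteq\mathscr G\skel 0$ compact open; this is precisely where effectiveness and the Hausdorff property are indispensable, as in the proof of Theorem~\ref{t:effective.case}. Everything else is a one-line computation, and the contrast with the prime case is instructive: there one had to connect two a priori different compact open sets $U,V$ by an arrow, invoking topological transitivity, whereas here the single idempotent $\chi_U$ already supplies a non-zero element of $I^2$, so no dynamical hypothesis on $\mathscr G$ is required.
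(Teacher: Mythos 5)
Your proposal is correct and matches the paper's proof essentially line for line: the same element-level computation $a\chi_U\ast f\ast a\chi_U = a^2(\chi_U\ast f\ast \chi_U)$ for necessity, and the same use of~\cite[Prop.~3.3]{groupoidprimitive} together with $a\chi_U\ast a\chi_U=a^2\chi_U\neq 0$ for sufficiency. Your closing observations --- that effectiveness and the Hausdorff property enter only through the cited proposition, and that the idempotency of $\chi_U$ replaces the topological transitivity needed in the prime case --- are accurate glosses on why the argument is shorter than that of Theorem~\ref{t:effective.case}.
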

\begin{proof}
Necessity is clear since if $r\in R$ with $r^2=0$ and $\emptyset\neq U\subseteq \mathscr G\skel 0$ is compact open, then $r\chi_U\ast f\ast r\chi_U=r^2(\chi_U\ast f\ast\chi_U)=0$ for any $f\in R\mathscr G$ and hence if $R\mathscr G$ is semiprime, then $r=0$.  Thus $R$ is reduced.  Conversely, if $R$ is reduced and $0\neq I$ is an ideal of $R\mathscr G$, then by~\cite[Prop.~3.3]{groupoidprimitive} there exists $a\in R\setminus \{0\}$ and $\emptyset \neq U\subseteq \mathscr G\skel 0$ compact open with $a\chi_U\in I$.  Then $a\chi_U\ast a\chi_U=a^2\chi_U\neq 0$ as $R$ is reduced.  Thus $I^2\neq 0$.  We conclude that $R\mathscr G$ is semiprime.
\end{proof}

Our next result, which is one of the main results of the paper, gives a sufficient condition condition for an ample groupoid algebra to be prime.  In the proof, we will use the well-known fact that if $\mathscr G$ is an ample groupoid with finitely many objects and one orbit, then $R\mathscr G\cong M_n(RG)$ where $n$ is the number of objects of $\mathscr G$ and $G$ is an isotropy group of $\mathscr G$ ($G$ is well defined up to isomorphism), cf.~\cite{gpdchain}.

\begin{Thm}\label{t:dense.orbit.case}
Let $\mathscr G$ be an ample groupoid and $R$ a commutative ring with unit.  Suppose that $\mathscr G$ contains an $R$-dense orbit $\mathcal O_x$ such that $RG_x$ is prime (i.e., $R$ is an integral domain and $G_x$ has no finite non-trivial normal subgroups).  Then $R\mathscr G$ is a prime ring.
\end{Thm}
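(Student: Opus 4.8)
The plan is to use the $R$-dense orbit to embed $R\mathscr G$ into the convolution algebra of the transitive groupoid carried by the orbit, and then to reduce primeness to that of $RG_x$ by a computation that ultimately takes place inside the isotropy group at $x$.

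First I would set $\Omega = \dom\inv(\mathcal O_x)=\ran\inv(\mathcal O_x)$, the set of arrows with both endpoints in $\mathcal O_x$ (the two sets agree since $\mathcal O_x$ is invariant). This is a subgroupoid in which every unit has isotropy group $G_x$ and every two units are joined by an arrow. Restriction $\rho(f)=f|_\Omega$ is an $R$-algebra homomorphism from $R\mathscr G$ into the convolution algebra of $R$-valued functions on $\Omega$: for $\gamma\in\Omega$ every factorization occurring in $f\ast g(\gamma)$ has both factors in $\Omega$, so $\rho(f\ast g)=\rho(f)\ast\rho(g)$. The role of the hypothesis is that $\rho$ is injective, since $R$-density of $\mathcal O_x$ says precisely that no nonzero element of $R\mathscr G$ vanishes on all of $\dom\inv(\mathcal O_x)=\Omega$. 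Thus it suffices to show $\mathcal A:=\rho(R\mathscr G)$ is prime, i.e.\ that for nonzero $\Phi,\Psi\in\mathcal A$ there is $\eta\in\mathcal A$ with $\Phi\ast\eta\ast\Psi\neq 0$.

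Next I would concentrate $\Phi$ and $\Psi$ at the single unit $x$. Pick $\alpha\in\Omega$ with $\Phi(\alpha)\neq 0$; writing $\dom(\alpha)=a_1,\ \ran(\alpha)=a_2\in\mathcal O_x$, choose compact open bisections $S\ni s$, $T\ni t$ with $s\colon a_2\to x$ and $t\colon x\to a_1$ and replace $\Phi$ by $\Phi'=\rho(\chi_S)\ast\Phi\ast\rho(\chi_T)$. Since $S,T$ are bisections, the only factorization of $\gamma_0:=s\alpha t\in G_x$ contributing to $\Phi'(\gamma_0)$ is $s\cdot\alpha\cdot t$, so $\Phi'(\gamma_0)=\Phi(\alpha)\neq 0$ and hence $\wh\Phi:=\sum_{g\in G_x}\Phi'(g)\,g\in RG_x$ is nonzero. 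Treating $\Psi$ the same way gives $\Psi'$ with $\wh\Psi\neq 0$ in $RG_x$. As $\Phi'\ast\eta\ast\Psi'\neq 0$ forces $\Phi\ast(\cdots)\ast\Psi\neq 0$, it remains to find $\eta$ for the pair $\Phi',\Psi'$. The heart of the argument is then a cancellation-free computation: for a compact open bisection $B$ and $g_0\in G_x$, every contributing middle factor $v\in B\cap\Omega$ in $(\Phi'\ast\rho(\chi_B)\ast\Psi')(g_0)=\sum_{g_0=uvw}\Phi'(u)\chi_B(v)\Psi'(w)$ must satisfy $\ran(v)\in R_\Phi$ and $\dom(v)\in R_\Psi$, where $R_\Phi=\{\dom(u):\Phi'(u)\neq 0,\ \ran(u)=x\}$ and $R_\Psi=\{\ran(w):\Psi'(w)\neq 0,\ \dom(w)=x\}$. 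Because $\Phi',\Psi'$ are finite $R$-combinations of characteristic functions of bisections and a bisection has at most one arrow with a prescribed endpoint, the sets $R_\Phi,R_\Psi$ are \emph{finite}. As $\mathscr G\skel 0$ is Hausdorff and $G_x$ discrete, I choose a neighborhood $N$ of $x$ meeting the finite set $R_\Phi\cup R_\Psi$ only in $x$, and for each prescribed $g\in G_x$ a compact open bisection $B_g\ni g$ with $\dom(B_g),\ran(B_g)\subseteq N$ and $B_g\cap G_x=\{g\}$. Then the only contributing $v$ is $g$, and $u,w$ are forced into $G_x$, so $(\Phi'\ast\rho(\chi_{B_g})\ast\Psi')|_{G_x}=\wh\Phi\cdot g\cdot\wh\Psi$ computed in $RG_x$. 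By primeness of $RG_x$ there is $\wh\eta=\sum_g c_g\,g$ with $\wh\Phi\,\wh\eta\,\wh\Psi\neq 0$; setting $\eta=\sum_g c_g\,\rho(\chi_{B_g})$ gives $(\Phi'\ast\eta\ast\Psi')|_{G_x}=\wh\Phi\,\wh\eta\,\wh\Psi\neq 0$ by linearity, and unwinding the reductions (using injectivity of $\rho$) yields $h\in R\mathscr G$ with $f\ast h\ast g\neq 0$.

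The main obstacle, and where essentially all the work concentrates, is exactly the potential cancellation in that middle computation coming from arrows of $\Omega$ near $x$ that are not in $G_x$: when $x$ is not isolated in its orbit there is in general no element of $\mathcal A$ literally supported on $G_x$, so a naive matrix-unit argument fails. What rescues the computation is the finiteness of $R_\Phi$ and $R_\Psi$ together with Hausdorffness of the unit space, which allow the bisections $B_g$ to be shrunk so that, although they meet the orbit in many arrows, none of the extraneous ones can contribute to a coefficient on $G_x$. I would double-check the two bookkeeping points this rests on—that restriction to $\Omega$ is an algebra homomorphism, and that the transported elements $\Phi',\Psi'$ are still finite combinations of bisection functions so that $R_\Phi,R_\Psi$ are finite—since the separation step depends entirely on them.
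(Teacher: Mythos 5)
Your argument is correct, and it rests on the same pillars as the paper's proof---$R$-density to land in the orbit, transport by $\chi_K$ for compact bisections, finiteness coming from the closed discrete fibers over $x$, and Hausdorff separation to shrink the middle bisections so extraneous factorizations cannot contribute---but the key reduction is genuinely different. The paper normalizes only one-sidedly ($f\mapsto \chi_K\ast f$, $g\mapsto g\ast\chi_{K'}$), so the surviving data is spread over a finite set $X$ of units; it then packages this in the finite discrete transitive subgroupoid $\mathscr H$ with $\mathscr H\skel 0=X$ and invokes $R\mathscr H\cong M_{|X|}(RG_x)$ together with primeness of matrix rings over prime rings to find the middle element $\til h$. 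Your two-sided normalization $\Phi'=\rho(\chi_S)\ast\Phi\ast\rho(\chi_T)$ concentrates nonzero mass of \emph{both} elements on $G_x$ itself, so the computation happens directly in $RG_x$, the matrix-ring step disappears, and your separation only needs to isolate the single unit $x$ inside the finite set $R_\Phi\cup R_\Psi$, rather than arranging $K_\gamma\cap \mathscr H\skel 1=\{\gamma\}$ for every $\gamma$ in the support of $\til h$; this makes your proof structurally parallel to the paper's own proof of the semiprime analogue, Theorem~\ref{t:dense.semiprime}. The restriction homomorphism $\rho$ onto the orbit groupoid is a clean framing but inessential: its injectivity is never actually needed, since nonvanishing of $(\Phi'\ast\eta\ast\Psi')|_{G_x}$ already witnesses nonvanishing in $R\mathscr G$. (Two minor points: $B_g\cap G_x=\{g\}$ is automatic for any bisection $B_g\ni g$, since all elements of $G_x$ have domain $x$; and the separation step uses Hausdorffness of $\mathscr G\skel 0$ plus finiteness of $R_\Phi\cup R_\Psi$, while discreteness of the fibers is what you need earlier for finiteness of $\wh\Phi$, $\wh\Psi$, $R_\Phi$, $R_\Psi$.)
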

\begin{proof}
Let $0\neq f,g\in R\mathscr G$.  Since $\mathcal O_x$ is $R$-dense, we can find $\alpha,\beta\in \mathscr G\skel 1$ with $f(\alpha)\neq 0\neq g(\beta)$ and $\ran(\alpha),\dom(\beta)\in \mathcal O_x$.  Choose $\gamma\colon \ran(\alpha)\to x$ and $\gamma'\colon x\to \dom(\beta)$ and let $K,K'\in \Bis_c(\mathscr G)$ with $\gamma\in K$ and $\gamma'\in K'$.  Then $\chi_K\ast f(\gamma\alpha) = f(\alpha)\neq 0$, $g\ast\chi_{K'}(\beta\gamma') = g(\beta)\neq 0$ and it suffices to find $h\in R\mathscr G$ with $(\chi_K\ast f)\ast h\ast (g\ast \chi_{K'})\neq 0$.  Thus, replacing $f$ by $\chi_K\ast f$ and $g$ by $g\ast \chi_{K'}$ we may assume without loss of generality that $\til f=f|_{\ran\inv(x)}\neq 0\neq g|_{\dom\inv(x)}=\til g$.

Since $\ran\inv(x)$ and $\dom\inv(x)$ are closed and discrete, they intersect any compact subset of $\mathscr G\skel 0$ in finitely many points.  Since $f$ and $g$ are finite linear combinations of characteristic functions of compact open subsets, it follows that $\til f,\til g$ are finitely supported.  Let
\[X=\dom(\til f\inv(R\setminus \{0\}))\cup \ran(\til g\inv(R\setminus \{0\}))\cup \{x\}\] and observe that $X$ is finite.  Let  $\mathscr H$ be the subgroupoid of $\mathscr G$ with $\mathscr H\skel 0=X$, $\mathscr H\skel 1 = \mathscr G\skel 1\cap \dom\inv(X)\cap \ran\inv(X)$; notice that it inherits the discrete topology from $\mathscr G$.  We can view $\til f,\til g$ as elements of $R\mathscr H$.  Since $\mathscr H$ is a discrete groupoid with one orbit (by construction each element of $X$ is in the orbit of $x$) and finitely many objects, $R\mathscr H\cong M_{|X|}(RG_x)$ (note that $\mathscr G$ and $\mathscr H$ have the same isotropy group at $x$ by construction).  As $RG_x$ is a prime ring,  $M_{|X|}(RG_x)$ is prime, cf.~\cite[Proposition~10.20]{LamBook}.  Thus we can find $\til h\in R\mathscr H$ with $\til f\ast \til h\ast \til g\neq 0$.

Let $Y=\til h\inv (R\setminus \{0\})$ and note that $Y$ is finite.  For each $\gamma\in Y$, choose $K_{\gamma}\in \Bis_c(\mathscr G)$ such that $\gamma\in K_{\gamma}$.  Since $\mathscr G\skel 0$ is Hausdorff and $X$ is finite, we can find $U_{\gamma},V_{\gamma}\subseteq \mathscr G\skel 0$ compact open such that $U_{\gamma}\cap X=\{\ran(\gamma)\}$ and $V_{\gamma}\cap X=\{\dom(\gamma)\}$.  Using that $K_{\gamma}$ is a local bisection  and replacing $K_{\gamma}$ by $U_{\gamma}K_{\gamma}V_{\gamma}$, we may assume without loss of generality that $K_{\gamma}\cap \mathscr H\skel 1=\{\gamma\}$.  Let us put
\[h = \sum_{\gamma\in Y}\til h(\gamma)\chi_{K_{\gamma}}.\]  Then $h\in R\mathscr G$.  We claim that if $\eta\in (\til f\ast \til h\ast \til g)\inv (R\setminus \{0\})$, then $f\ast h\ast g(\eta)=\til f\ast \til h\ast \til g(\eta)\neq 0$.

Indeed, since $\til f$ is supported on $\ran\inv (x)$ and $\til g$ is supported on $\dom\inv (x)$, we conclude that $\eta\in G_x$.  Note that
\begin{equation}\label{eq:1}
f\ast h\ast g(\eta)=\sum_{\alpha_1\alpha_2\alpha_3=\eta}f(\alpha_1)h(\alpha_2)g(\alpha_3).
\end{equation}
If $\alpha_1\alpha_2\alpha_3=\eta$ with $f(\alpha_1)h(\alpha_2)g(\alpha_3)\neq 0$, then $\ran(\alpha_1)=\ran(\eta)=x$ and $\dom(\alpha_3)=\dom(\eta)=x$ and so $\dom(\alpha_2),\ran(\alpha_2)\in X$ by construction. Therefore, $\alpha_1,\alpha_2,\alpha_3\in \mathscr H\skel 1$.  Moreover, since the support of $h$ is contained in $\bigcup_{\gamma\in Y} K_{\gamma}$ and $\mathscr H\skel 1\cap K_{\gamma}=\{\gamma\}$, for $\gamma\in Y$,  we obtain that $\alpha_2\in Y$ and \[f(\alpha_1)h(\alpha_2)g(\alpha_3) = \til f(\alpha_1)\til h(\alpha_2)\til g(\alpha_3).\]  Conversely, if $\alpha_1,\alpha_2,\alpha_3\in \mathscr H\skel 1$ with $\alpha_1\alpha_2\alpha_3=\eta$, then $\til f(\alpha_1)\til h(\alpha_2)\til g(\alpha_3)=f(\alpha_1)h(\alpha_2)g(\alpha_3)$ by construction.  It follows  that the right hand side of  \eqref{eq:1} is
precisely $\til f\ast \til h\ast \til g(\eta)$.  This completes the proof.
\end{proof}

Next we prove an analogue of Theorem~\ref{t:dense.orbit.case} for semiprimeness.

\begin{Thm}\label{t:dense.semiprime}
Let $\mathscr G$ be an ample groupoid and $R$ a commutative ring with unit.  Suppose that the set $Z$ of $x\in \mathscr G\skel 0$ such that $RG_x$ is semiprime is $R$-dense.  Then $R\mathscr G$ is a semiprime ring.
\end{Thm}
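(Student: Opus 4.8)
The plan is to prove semiprimeness via the element-wise criterion recorded at the start of this section: it suffices to show that for every $0\neq f\in R\mathscr G$ there is some $h\in R\mathscr G$ with $f\ast h\ast f\neq 0$. The argument will closely follow that of Theorem~\ref{t:dense.orbit.case}, but the fact that the \emph{same} element $f$ appears on both sides (rather than two independent elements $f,g$) forces us to localize $f$ onto a single isotropy group and to invoke semiprimeness of a corner isomorphic to $RG_x$, rather than primeness of a matrix algebra.

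First I would use $R$-density of $Z$: since $f\neq 0$, there is an arrow $\beta$ with $f(\beta)\neq 0$ and $x:=\dom(\beta)\in Z$, so $RG_x$ is semiprime. Choosing $K\in\Bis_c(\mathscr G)$ with $\beta\in K$, a direct convolution computation gives $\chi_{K^*}\ast f(1_x)=f(\beta)\neq 0$, where $1_x$ is the identity arrow at $x$. Put $a=\chi_{K^*}\ast f$; then $a$ is nonzero at $1_x\in G_x$. Since $a\ast h\ast a=\chi_{K^*}\ast\bigl(f\ast(h\ast\chi_{K^*})\ast f\bigr)$, it will be enough to produce $h$ with $a\ast h\ast a\neq 0$, for then $f\ast(h\ast\chi_{K^*})\ast f\neq 0$. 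The role of this step is precisely to move a nonzero value of $f$ onto $G_x$, which is what lets a single element play both the left and the right role.

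Next, as in Theorem~\ref{t:dense.orbit.case}, the sets $\ran\inv(x)$ and $\dom\inv(x)$ are closed and discrete, so $\hat a=a|_{\ran\inv(x)}$ and $\check a=a|_{\dom\inv(x)}$ are finitely supported; set $X=\dom(\hat a\inv(R\setminus\{0\}))\cup\ran(\check a\inv(R\setminus\{0\}))\cup\{x\}$, a finite set, and let $\mathscr H$ be the discrete subgroupoid on $X$. Every point of $X$ is joined to $x$ by an arrow in the support of $\hat a$ or $\check a$, so $\mathscr H$ has a single orbit and finitely many objects, whence $R\mathscr H\cong M_{|X|}(RG_x)$. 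The idempotent $e=\chi_{\{1_x\}}\in R\mathscr H$ satisfies $eR\mathscr He\cong RG_x$ (cf.\ Proposition~\ref{p:isolated}), and $u:=e\ast(a|_{\mathscr H\skel 1})\ast e=a|_{G_x}$ is nonzero since $u(1_x)=a(1_x)\neq 0$. As $RG_x$ is semiprime, applying the element-wise criterion inside the corner $eR\mathscr He$ yields $w\in eR\mathscr He$, necessarily supported on $G_x$, with $u\ast w\ast u\neq 0$.

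Finally I would lift $w=\sum_{\gamma\in Y}w(\gamma)\chi_{\{\gamma\}}$ (where $Y=\supp w\subseteq G_x$) to $h=\sum_{\gamma\in Y}w(\gamma)\chi_{K_\gamma}\in R\mathscr G$, choosing $K_\gamma\in\Bis_c(\mathscr G)$ with $\gamma\in K_\gamma$ and $K_\gamma\cap\mathscr H\skel 1=\{\gamma\}$, exactly as in Theorem~\ref{t:dense.orbit.case} (using that $\mathscr G\skel 0$ is Hausdorff and $X$ is finite). Because $u\ast w\ast u$ is supported on $G_x$, one picks $\eta\in G_x$ with $u\ast w\ast u(\eta)\neq 0$; the conditions $\ran(\alpha_1)=x$ and $\dom(\alpha_3)=x$ force every contributing factorization $\eta=\alpha_1\alpha_2\alpha_3$ to lie in $\mathscr H\skel 1$ with $\alpha_2=\gamma$ for some $\gamma\in Y$, so that $a\ast h\ast a(\eta)=u\ast w\ast u(\eta)\neq 0$. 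Thus $a\ast h\ast a\neq 0$, and undoing the localization gives $f\ast(h\ast\chi_{K^*})\ast f\neq 0$, completing the proof. I expect the only genuinely delicate point to be the one flagged above: arranging, through the localization $a=\chi_{K^*}\ast f$, that a single element is nonzero on $G_x$, so that the semiprimeness of $RG_x$ can be applied in the corner $eR\mathscr He$ with $a$ on both sides; the remaining bookkeeping is identical to the prime case.
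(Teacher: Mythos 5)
Your proposal is correct and takes essentially the same route as the paper's proof: localize $f$ onto $G_x$ by convolving with the characteristic function of a compact bisection through (the inverse of) $\beta$, apply semiprimeness of $RG_x$ to $f|_{G_x}$, and lift the resulting group-algebra element via bisections $K_\gamma$ satisfying $K_\gamma\cap\dom\inv(X)\cap\ran\inv(X)=\{\gamma\}$, with the same factorization bookkeeping at the end. The only difference is cosmetic: the paper works directly with $\til f=f|_{G_x}$ as an element of $RG_x$, so your detour through the finite discrete subgroupoid $\mathscr H$, the isomorphism $R\mathscr H\cong M_{|X|}(RG_x)$, and the corner $eR\mathscr He$ is redundant (that matrix-algebra step is needed only in the prime case, Theorem~\ref{t:dense.orbit.case}, where two elements $f,g$ live over different points of the orbit).
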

\begin{proof}
Let $0\neq f\in R\mathscr G$. We need to show that $f\ast h\ast f\neq 0$ for some $h\in R\mathscr G$.  Observe that the set $Z$ is invariant since elements of the same orbit have isomorphic isotropy groups. Since $Z$ is $R$-dense, we can find $\alpha\in \mathscr G\skel 1$ with $f(\alpha)\neq 0$ and $x=\dom(\alpha)\in Z$.  Let $K\in \Bis_c(\mathscr G)$ with $\alpha\inv\in K$.  Then $\chi_K\ast f(x) = f(\alpha)\neq 0$ and it suffices to find $h\in R\mathscr G$ such that $(\chi_K\ast f)\ast h\ast (\chi_K\ast f)\neq 0$.  Thus, replacing $f$ by $\chi_K\ast f$  we may assume without loss of generality that $\til f=f|_{G_x}\neq 0$.

Note that $\til f$ is finitely supported since $G_x$ is closed and discrete.  Let
\[X=\dom(f|_{\ran\inv(x)}\inv(R\setminus \{0\}))\cup \ran(f|_{\dom\inv(x)}\inv(R\setminus \{0\}))\cup \{x\}\] and observe that $X$ is finite since the fibers of $\dom$ and $\ran$ are closed and discrete.   We  view $\til f$ as an element of $RG_x$. Since $RG_x$ is semiprime, we can find $\til h\in RG_x$ with $\til f\ast \til h\ast\til f\neq 0$.

Let $Y=\til h\inv (R\setminus \{0\})$ and note that $Y$ is finite.  For each $\gamma\in Y$, choose $K_{\gamma}\in \Bis_c(\mathscr G)$ such that $\gamma\in K_{\gamma}$.  Since $\mathscr G\skel 0$ is Hausdorff and $X$ is finite, we can find $U\subseteq \mathscr G\skel 0$ compact open such that $U\cap X=\{x\}$.  Using that $K_{\gamma}$ is a local bisection and replacing $K_{\gamma}$ by $UK_{\gamma}U$, we may assume without loss of generality that $K_{\gamma}\cap \dom\inv(X)\cap \ran\inv(X)=\{\gamma\}$.  Set
\[h = \sum_{\gamma\in Y}\til h(\gamma)\chi_{K_{\gamma}}.\]  Then $h\in R\mathscr G$.  We claim that if $\eta\in (\til f\ast \til h\ast \til f)\inv (R\setminus \{0\})$, then $f\ast h\ast f(\eta)=\til f\ast \til h\ast \til f(\eta)\neq 0$.

Observe that
\begin{equation}\label{eq:2}
f\ast h\ast f(\eta)=\sum_{\alpha_1\alpha_2\alpha_3=\eta}f(\alpha_1)h(\alpha_2)f(\alpha_3).
\end{equation}
If $\alpha_1\alpha_2\alpha_3=\eta$ with $f(\alpha_1)h(\alpha_2)f(\alpha_3)\neq 0$, then $\ran(\alpha_1)=\ran(\eta)=x$ and $\dom(\alpha_3)=\dom(\eta)=x$, whence $\dom(\alpha_2),\ran(\alpha_2)\in X$ by construction.  Moreover, since the support of $h$ is contained in $\bigcup_{\gamma\in Y} K_{\gamma}$, by construction $\alpha_2\in G_x$ and hence \[f(\alpha_1)h(\alpha_2)f(\alpha_3) = \til f(\alpha_1)\til h(\alpha_2)\til f(\alpha_3).\]  Conversely, if $\eta=\alpha_1\alpha_2\alpha_3$ with $\alpha_1,\alpha_2,\alpha_3\in G_x$, then $\til f(\alpha_1)\til h(\alpha_2)\til f(\alpha_3)=f(\alpha_1)h(\alpha_2)f(\alpha_3)$.  It  follows  that the right hand side of  \eqref{eq:2} is
precisely $\til f\ast \til h\ast \til f(\eta)$.  This completes the proof.
\end{proof}

\section{Applications}
In this section, we show that the results of the previous section are strong enough to recover the results of Munn~\cite{Munnprime} on primeness of inverse semigroup algebras and of Abrams, Bell and Rangaswamy on prime Leavitt path algebras~\cite{primleavittbell}.

\subsection{Leavitt path algebras}
To recover the description of prime Leavitt path algebras from~\cite{primleavittbell}, we need to combine Theorem~\ref{t:effective.case} with Theorem~\ref{t:dense.orbit.case}.
Let $E=(E\skel 0,E\skel 1)$ be a (directed) graph (or quiver) with vertex set $E\skel 0$ and edge set $E\skel 1$.  We use $\sour(e)$ for the source of an edge $e$ and $\ran(e)$ for the range, or target.   A vertex $v$ is called a \emph{sink} if $\sour\inv(v)=\emptyset$ and it is called an \emph{infinite emitter} if $|\sour\inv(v)|=\infty$.  The length of a finite (directed) path $\alpha$ is denoted $|\alpha|$.

The \emph{Leavitt path algebra}~\cite{Leavitt1,LeavittPardo,Abramssurvey,LeavittBook} $L_R(E)$ of $E$ with coefficients in the unital commutative ring $R$ is the $R$-algebra generated by a set $\{v\in E\skel 0\}$ of pairwise orthogonal idempotents and a set of variables $\{e,e^*\mid e\in E\skel 1\}$ satisfying the relations:
\begin{enumerate}
\item $\sour(e)e=e=e\ran(e)$ for all $e\in E\skel 1$;
\item $\ran(e)e^*= e^*=e^*\sour(e)$ for all $e\in E\skel 1$;
\item $e^*e'=\delta_{e,e'}\ran(e)$ for all $e,e'\in E\skel 1$;
\item $v=\sum_{e\in \sour^{-1}(v)} ee^*$ whenever $v$ is not a sink and not an infinite emitter.
\end{enumerate}

It is well known that $L_R(E)=R\mathscr G_E$  for the graph groupoid $\mathscr G_E$ defined as follows.  Let $\partial E$ consist of all one-sided infinite paths in $E$ as well as all finite paths $\alpha$ ending in a vertex $v$ that is either a sink or an infinite emitter.  If $\alpha$ is a finite path in $E$ (possibly empty), put $Z(\alpha)=\{\alpha\beta\in \partial E\}$ (if $\alpha$ is the empty path $\varepsilon_v$ at $v$, this should be interpreted as those elements of $\partial E$ with initial vertex $v$).  Note that $Z(\alpha)$ is never empty.  Then a basic open neighborhood  of $\partial E$ is of the form $Z(\alpha)\setminus (Z(\alpha e_1)\cup\cdots \cup Z(\alpha e_n))$ with $e_i\in E\skel 1$, for $i=1,\ldots n$ (and possibly $n=0$).  These neighborhoods are compact open.

  The graph groupoid $\mathscr G_E$ is the given by:
\begin{itemize}
\item $\mathscr G_E\skel 0= \partial E$;
\item $\mathscr G_E\skel 1 =\{ (\alpha\gamma,|\alpha|-|\beta|,\beta\gamma)\in \partial E\times \mathbb Z\times \partial E\}\mid |\alpha|,|\beta|<\infty\}$.
\end{itemize}
One has $\dom(\eta,k,\gamma)=\gamma$, $\ran(\eta,k,\gamma)=\eta$ and $(\eta,k,\gamma)(\gamma,m,\xi) = (\eta, k+m,\xi)$.  The inverse of $(\eta,k,\gamma)$ is $(\gamma,-k,\eta)$.

A basis of compact open subsets for the topology on $\mathscr G_E\skel 1$ can be described as follows. Let $\alpha,\beta$ be finite paths ending at the same vertex and let $U\subseteq Z(\alpha)$, $V\subseteq Z(\beta)$ be compact open with $\alpha\gamma\in U$ if and only if $\beta\gamma\in V$.  Then the set \[(U,\alpha,\beta,V)=\{\alpha\gamma,|\alpha|-|\beta|,\beta\gamma)\mid \alpha\gamma\in U,\beta\gamma\in V\}\] is a basic compact open set of $\mathscr G_E\skel 1$.
Of particular importance are the compact open sets $Z(\alpha,\beta) = (Z(\alpha),\alpha,\beta,Z(\beta))= \{(\alpha\gamma,|\alpha|-|\beta|,\beta\gamma)\in \mathscr G\skel 1\}$ where $\alpha,\beta$ are finite paths ending at the same vertex.
It is well known, and easy to see, that $\mathscr G_E$ is Hausdorff.

There is an isomorphism $L_R(E)\to R\mathscr G_E$ sending $v\in E\skel 0$ to the characteristic function of $Z(\varepsilon_v,\varepsilon_v)$ and, for $e\in E\skel 1$, sending $e$ to the characteristic function of $Z(e,\varepsilon_{\ran(e)})$ and $e^*$ to the characteristic function of $Z(\varepsilon_{\ran(e)},e)$, cf.~\cite{operatorguys1,groupoidprimitive,CRS17,Strongeffective} or~\cite[Example~3.2]{GroupoidMorita}.

By a \emph{cycle} in a directed graph $E$, we mean a simple, directed,  closed circuit.  A cycle is said to have an \emph{exit} if some vertex on the cycle has out-degree at least two.  It is well known that the isotropy group at an element $\gamma\in \partial E$ is trivial unless $\gamma$ is \emph{eventually periodic}, that is, $\gamma=\rho\alpha\alpha\cdots$ with $\alpha$ a cycle, in which case the isotropy group is infinite cyclic, cf.~\cite{gpdchain}.

If $u,v\in E\skel 0$, we write $u\geq v$ if there is a path (possibly empty) from $u$ to $v$.
The graph $E$ is said to be \emph{downward directed} (or to satisfy condition (MT3), cf.~\cite{primleavittbell,LeavittBook}) if, for each pair of vertices $u,v\in E\skel 0$, there is a vertex $w$ such that $u,v\geq w$.  Our first goal is to verify that condition (MT3) is satisfied if and only if $\mathscr G_E$ is topologically transitive.

\begin{Prop}\label{p:MT3}
The graph $E$ is downward directed if and only if $\mathscr G_E$ is topologically transitive.
\end{Prop}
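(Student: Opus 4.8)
The plan is to use characterization (4) of Proposition~\ref{p:top.trans}: the groupoid $\mathscr G_E$ is topologically transitive if and only if for every pair of non-empty open sets $U,V\subseteq \mathscr G_E\skel 0=\partial E$ we have $\dom\inv(U)\cap\ran\inv(V)\neq\emptyset$. Since $\dom(\eta,k,\gamma)=\gamma$ and $\ran(\eta,k,\gamma)=\eta$, and a triple $(\eta,k,\gamma)$ is an arrow precisely when $\eta$ and $\gamma$ share a common tail (i.e., are cofinal), this condition translates into the following combinatorial statement: \emph{for any non-empty open $U,V$ there exist cofinal boundary paths $\gamma\in U$ and $\eta\in V$}. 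I would prove this reformulation equivalent to downward directedness.

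The first step is a reduction lemma: every non-empty open subset of $\partial E$ contains a full cylinder $Z(\gamma)$ for some finite path $\gamma$. Indeed, such a set contains a non-empty basic neighborhood $Z(\alpha)\setminus(Z(\alpha e_1)\cup\cdots\cup Z(\alpha e_m))$; writing $u=\ran(\alpha)$ for the terminal vertex of $\alpha$, if $u$ is a sink then $m=0$ and the neighborhood is $Z(\alpha)$, while if $u$ is an infinite emitter or an ordinary vertex then non-emptiness forces an edge $e$ with $\sour(e)=u$ and $e\notin\{e_1,\dots,e_m\}$, so that $Z(\alpha e)$ is contained in the neighborhood. (When $u$ is neither a sink nor an infinite emitter, $\alpha\notin\partial E$ and $Z(\alpha)=\bigcup_{\sour(e)=u}Z(\alpha e)$, so deleting every out-edge would leave the empty set.) I expect the bookkeeping around the infinite-emitter punctures to be the one genuinely fiddly point, but it is routine.

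For the forward direction, assume $E$ is downward directed and let $U,V$ be non-empty open. By the reduction choose finite paths $\alpha,\beta$ with $Z(\alpha)\subseteq U$ and $Z(\beta)\subseteq V$, ending at vertices $u,v$. By condition (MT3) pick $w$ with $u,v\geq w$, witnessed by finite paths $\mu\colon u\to w$ and $\nu\colon v\to w$, and choose any boundary path $\xi\in Z(\varepsilon_w)$ (non-empty, as every such cylinder is). Then $\gamma=\alpha\mu\xi\in Z(\alpha)\subseteq U$ and $\eta=\beta\nu\xi\in Z(\beta)\subseteq V$ share the common tail $\xi$, so they are cofinal and the arrow $(\eta,\,|\beta\nu|-|\alpha\mu|,\,\gamma)$ lies in $\dom\inv(U)\cap\ran\inv(V)$. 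Thus condition (4) holds and $\mathscr G_E$ is topologically transitive.

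For the converse, assume $\mathscr G_E$ is topologically transitive and fix vertices $u,v\in E\skel 0$. Applying condition (4) to the non-empty open cylinders $U=Z(\varepsilon_u)$ and $V=Z(\varepsilon_v)$ yields an arrow $(\eta,k,\gamma)$ with $\gamma\in Z(\varepsilon_u)$ and $\eta\in Z(\varepsilon_v)$; write $\gamma=p\zeta$ and $\eta=q\zeta$ with common tail $\zeta\in\partial E$ and $p,q$ finite. Since $\gamma$ begins at $u$ and $\eta$ begins at $v$, the paths $p$ and $q$ run from $u$ and from $v$, respectively, to the initial vertex $w$ of $\zeta$, giving $u\geq w$ and $v\geq w$. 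Hence $E$ is downward directed, completing the equivalence.
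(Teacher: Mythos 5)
Your proof is correct, and your converse direction (topological transitivity implies downward directedness) is essentially identical to the paper's: both apply condition (4) of Proposition~\ref{p:top.trans} to the cylinders $Z(\varepsilon_u)$ and $Z(\varepsilon_v)$ and read off, from an arrow between them, finite paths from $u$ and $v$ to the initial vertex of the common tail. The forward direction, however, takes a genuinely different route. The paper verifies condition (2) of Proposition~\ref{p:top.trans} and splits into two cases: if $E$ has a sink $w$, downward directedness forces $w$ to be the unique sink, the orbit of $\varepsilon_w$ is shown to be dense, and transitivity follows from Lemma~\ref{l:dense.orbit}; if $E$ has no sink, the paper shows directly that any non-empty open invariant set $U$ meets any basic neighborhood, choosing out-edges avoiding the deleted cylinders and using invariance of $U$ to move a point of a basic neighborhood $W\subseteq U$ along a common tail. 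You instead verify condition (4) uniformly via your reduction lemma that every non-empty open subset of $\partial E$ contains a full cylinder $Z(\gamma)$; your three-way analysis of the terminal vertex (sink, infinite emitter, regular vertex) is exactly the bookkeeping that the paper's no-sink hypothesis is designed to sidestep, and it is carried out correctly --- the only cosmetic imprecision is the claim that $m=0$ at a sink, where one should rather say the deleted sets $Z(\alpha e_i)$ are empty, so the basic neighborhood still equals $Z(\alpha)$. Your route buys uniformity: one argument, no case split on the existence of sinks, no appeal to invariance of open sets, and no use of Lemma~\ref{l:dense.orbit}. The paper's route yields slightly more information in the sink case, namely that $\mathscr G_E$ then has a dense orbit (that of $\varepsilon_w$), which is strictly stronger than topological transitivity and ties into the primitivity results later in the paper.
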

\begin{proof}
Notice that $\alpha,\beta\in \partial E$ belong to the same orbit if and only if they have a common suffix.  Suppose first that $\mathscr G_E$ is topologically transitive and let $u,v\in E\skel 0$. Then $\ran\inv(Z(\varepsilon_u))\cap \dom\inv(Z(\varepsilon_v))\neq \emptyset$ by Proposition~\ref{p:top.trans}. Hence we can find $(\alpha\gamma,|\alpha|-|\beta|,\beta\gamma)\in \mathscr G_E\skel 1$ with $\beta\gamma\in Z(\varepsilon_v)$ and $\alpha\gamma\in Z(\varepsilon_u)$.  Then if $w$ is the initial vertex of $\gamma$, we have that $\alpha,\beta$ are paths from $u,v$ to $w$, respectively.   Thus $E$ is downward directed.

Suppose now that $E$ is downward directed. Suppose first that $E$ has a sink $w$. Then since $E$ is downward directed, it follows $v\geq w$ for all $v\in E\skel 0$ and in particular $w$ is the unique sink.  Then the orbit of $\varepsilon_w$ is dense.  Indeed, if $\emptyset\neq V= Z(\alpha)\setminus (Z(\alpha e_1)\cup \cdots\cup Z(\alpha e_n))$, then either $\alpha$ ends at $w$ and so $\alpha\in V\cap \mathcal O_{\varepsilon_w}$ or there is an edge $e\neq e_1,\ldots, e_n$ with $\alpha e$ a path. Then there is a path $\beta$ from $\ran(e)$ to $w$ and $\alpha e\beta\in V\cap \mathcal O_{\varepsilon_w}$.  Thus $\mathcal O_{\varepsilon_w}$ is dense and hence $\mathscr G_E$ is topologically transitive by Proposition~\ref{l:dense.orbit}.

Assume now that $E$ does not have a sink.
Let $\emptyset\neq U\subseteq \partial E$ be open and invariant.  Let $\emptyset\neq V= Z(\alpha)\setminus (Z(\alpha e_1)\cup \cdots\cup Z(\alpha e_n))$ be a basic neighborhood and let $\emptyset\neq W=Z(\beta)\setminus (Z(\beta f_1)\cup\cdots\cup Z(\beta f_m))$ be a basic neighborhood contained in $U$.  Then since $V$ and $W$ are non-empty, there are edges $e,f$ with $\alpha e$ and $\beta f$ paths and $e\neq e_i$ and $f\neq f_j$ for all $i,j$.  Then, by the downward directed property, we can find $w\in E\skel 0$ with $\ran(e),\ran(f)\geq w$.  Let $\gamma\in \partial E$ begin at $w$.  Then $\alpha e\rho\gamma, \beta f\sigma\gamma\in \partial E$ for some paths $\rho,\sigma$.  Then $\alpha e\rho\gamma\in V$ and $\beta f\sigma\gamma\in W\subseteq U$ and hence $\alpha e\rho\gamma \in U\cap V$, as $U$ is invariant.   Therefore, $U$ is dense and so $\mathscr G_E$ is topologically transitive in this case, as well.
\end{proof}

The graph $E$ is said to satisfy \emph{condition (L)} if every cycle has an exit.  It is well known that $\mathscr G_E$ is effective if and only if each cycle has an exit, cf.~\cite{Strongeffective}.  Since many of the references assume that $E$ is countable or row-finite, we shall prove it here.  Note that if a Hausdorff \'etale groupoid has a dense set of objects with trivial isotropy groups, then it is effective. Indeed, if $U$ is an open subset contained in the isotropy bundle, then $V=U\setminus \mathscr G\skel 0$ is open (as $\mathscr G\skel 0$ is closed). If $V\neq\emptyset$, then there exists $x\in \dom(V)$ with trivial isotropy group.  But then if $g\in V$ with $\dom(g)=x$, we have $g\in V\cap \mathscr G\skel 0$, a contradiction.  Thus $\mathscr G\skel 0$ is the interior of the isotropy bundle.

\begin{Prop}\label{p:condL}
The graph $E$ satisfies condition (L) if and only if $\mathscr G_E$ is effective.
\end{Prop}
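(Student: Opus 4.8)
The plan is to prove both implications using two facts recorded above: that $\mathscr G_E$ is Hausdorff, together with the criterion established in the paragraph preceding the statement that a Hausdorff \'etale groupoid with a dense set of units having trivial isotropy is effective; and that the isotropy group at $\gamma\in\partial E$ is trivial precisely when $\gamma$ is not eventually periodic.

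For the implication ``effective $\Rightarrow$ condition (L)'' I would argue the contrapositive. Suppose $c=e_1\cdots e_k$ is a cycle based at a vertex $v$ with no exit. Then every vertex of $c$ has out-degree one, so $v$ is neither a sink nor an infinite emitter and the only boundary path issuing from $v$ is the forced path $c^{\infty}$; hence $Z(\varepsilon_v)=\{c^{\infty}\}$. Consequently the basic compact open bisection $Z(c,\varepsilon_v)$ is the singleton $\{(c^{\infty},|c|,c^{\infty})\}$. This is an open subset of $\mathscr G_E\skel 1$ consisting of a single arrow that lies in the isotropy bundle, since its source and range are both $c^{\infty}$, but that is not a unit, since $|c|\neq 0$. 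Thus $\mathrm{Int}(\Is(\mathscr G_E)\skel 1)\neq \mathscr G_E\skel 0$ and $\mathscr G_E$ is not effective.

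For the implication ``condition (L) $\Rightarrow$ effective'' I would invoke the criterion and show that the boundary paths with trivial isotropy are dense, i.e.\ that every non-empty basic open set $V=Z(\alpha)\setminus(Z(\alpha e_1)\cup\cdots\cup Z(\alpha e_n))$ contains a non-eventually-periodic (or finite) boundary path. If $\alpha$ itself ends at a sink or an infinite emitter then $\alpha\in V$ is a finite boundary path, which has trivial isotropy. Otherwise, since $V\neq\emptyset$, I can extend $\alpha$ by an edge $e\neq e_1,\dots,e_n$; every infinite continuation of $\alpha e$ then stays in $V$, so I am free to prolong the path however I like. If at some point the construction reaches a sink or an infinite emitter, I stop and obtain a finite boundary path in $V$ with trivial isotropy. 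So assume the path grows forever through vertices of finite, nonzero out-degree. If from some point on every vertex has out-degree one, the continuation is forced; were it to revisit a vertex, the intervening simple loop would consist of out-degree-one vertices and so be an exit-free cycle, contradicting (L); hence all its vertices are distinct and the path is not eventually periodic. If instead infinitely many vertices along the path have out-degree at least two, I use the freedom of choice at these vertices, noting each prolongation extends to a boundary path since $Z$ is never empty, to build a path that is not eventually periodic.

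The step I expect to be the main obstacle is this last one: arranging the choices at the out-degree $\geq 2$ vertices so that the resulting infinite path, written $g_1g_2\cdots$, is provably not eventually periodic. I would handle it by a routine diagonalization over the countably many pairs $(N,p)$ of integers: reserving for each such pair a distinct choice-point at a position $m\geq N+p$, I select at $m$ an outgoing edge different from the already-determined edge $g_{m-p}$, which is possible because the current vertex has at least two outgoing edges, thereby forcing $g_{m-p}\neq g_m$ and killing the requirement that the path have period $p$ from position $N$ onward. Since there are infinitely many choice-points, all requirements can be met, so the path is not eventually periodic and thus has trivial isotropy. This establishes the density of trivial-isotropy units and, via the criterion, completes the proof.
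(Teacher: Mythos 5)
Your proof is correct and follows essentially the same route as the paper: failure of (L) is refuted by exhibiting an open set of non-unit isotropy arrows over $c^\infty$ (your singleton $Z(c,\varepsilon_v)$ versus the paper's cylinder $Z(\alpha,\alpha)$), and (L) implies effectiveness via the Hausdorff criterion together with density of boundary paths having trivial isotropy. The only difference is one of detail: the paper simply asserts that under (L) every basic open subset of $\partial E$ contains a non-eventually-periodic element, whereas you substantiate this with the sink/forced-tail case analysis and the diagonalization over pairs $(N,p)$, which is a valid (if slightly more elaborate than necessary) way to fill that gap.
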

\begin{proof}
Suppose that $E$ does not satisfy condition (L) and that $\alpha$ is a cycle with no exit.  In particular, no vertex of $\alpha$ is a sink or infinite emitter.  Then $Z(\alpha,\alpha)$ contains precisely the elements of the form $(\alpha\alpha\cdots, k|\alpha|,\alpha\alpha\cdots)\in G_{\alpha\alpha\cdots}$ with $k\in \mathbb Z$ and hence $\mathscr G_E$ is not effective.  Conversely, assume that $E$ satisfies condition (L). Note that since every cycle has an exit, each basic open subset of $\partial E$ contains an element $\tau$ that is not eventually periodic. As the non-eventually periodic elements have trivial isotropy and $\mathscr G_E$ is Hausdorff, it follows that $\mathscr G_E$ is effective.
\end{proof}

We now prove the following theorem from~\cite{primleavittbell}.

\begin{Thm}
Let $E$ be a graph and $R$ be a commutative ring with unit.  Then $L_R(E)$ is a prime ring if and only if $R$ is an integral domain and $E$ is downward directed.
\end{Thm}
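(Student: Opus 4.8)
The plan is to deduce this theorem as a direct consequence of the groupoid machinery already developed, using the Leavitt path algebra/groupoid identification $L_R(E)\cong R\mathscr G_E$ together with Propositions~\ref{p:MT3} and~\ref{p:condL}. The key point is that the theorem must hold for \emph{all} graphs $E$, whereas the clean characterization in Theorem~\ref{t:effective.case} requires $\mathscr G_E$ to be effective, i.e.\ (by Proposition~\ref{p:condL}) requires condition (L). So I would split the argument into the case where $E$ satisfies condition (L) and the case where it does not, handling the latter via the dense-orbit sufficient condition of Theorem~\ref{t:dense.orbit.case}.

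\medskip

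For the forward direction, suppose $L_R(E)=R\mathscr G_E$ is prime. Since $\mathscr G_E$ is ample (and Hausdorff), Proposition~\ref{p:need.trans} immediately gives that $R$ is an integral domain and that $\mathscr G_E$ is topologically transitive; by Proposition~\ref{p:MT3} the latter is exactly the statement that $E$ is downward directed. This direction requires no case analysis.

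\medskip

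For the converse, assume $R$ is an integral domain and $E$ is downward directed, so that $\mathscr G_E$ is topologically transitive by Proposition~\ref{p:MT3}. If $E$ satisfies condition (L), then $\mathscr G_E$ is effective by Proposition~\ref{p:condL}, and since $\mathscr G_E$ is Hausdorff, Theorem~\ref{t:effective.case} applies directly: $R$ an integral domain plus topological transitivity yields that $R\mathscr G_E$ is prime, and we are done. If $E$ does \emph{not} satisfy condition (L), then there is a cycle $\alpha$ with no exit; I would use this cycle to produce an $R$-dense orbit with prime isotropy group and invoke Theorem~\ref{t:dense.orbit.case}. The natural candidate is the orbit of the purely periodic path $p=\alpha\alpha\cdots$, whose isotropy group $G_p$ is infinite cyclic (as recalled in the text), and $R\mathbb Z$ is prime whenever $R$ is an integral domain since $\mathbb Z$ has no nontrivial finite normal subgroups — so $RG_p$ is prime.

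\medskip

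\textbf{The main obstacle} is verifying that the orbit $\mathcal O_p$ of this exit-free cycle is actually $R$-dense, not merely dense. Density should follow from downward directedness by an argument parallel to the sink case in Proposition~\ref{p:MT3}: given any basic open set $Z(\delta)\setminus(\cdots)$, downward directedness lets me route from the terminal vertex of $\delta$ (or of an admissible extension) down to a vertex on the no-exit cycle $\alpha$, and once a path reaches a vertex of a no-exit cycle it is forced to continue around that cycle forever, landing in $\mathcal O_p$. The subtlety is the upgrade from dense to $R$-dense; here I would appeal to the fact that $\mathscr G_E$ is Hausdorff, for which dense and $R$-dense coincide (the property recalled just before Section~4, from~\cite[Prop.~4.2]{groupoidprimitive}). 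Thus the heart of the work is the combinatorial density argument, after which the algebraic conclusion is automatic. An alternative that avoids the case split entirely would be to run the Theorem~\ref{t:dense.orbit.case} argument uniformly, but that requires exhibiting a single $R$-dense orbit with prime isotropy in \emph{all} transitive cases, which is not always available when (L) holds (dense orbits may have trivial isotropy yet fail to be single orbits), so the two-case structure seems genuinely necessary.
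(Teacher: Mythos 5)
Your proposal is correct and follows essentially the same route as the paper: forward direction via Proposition~\ref{p:need.trans} and Proposition~\ref{p:MT3}, then a converse split into the condition~(L) case (Theorem~\ref{t:effective.case}) and the exit-free-cycle case, where downward directedness forces every vertex to reach the cycle, making the orbit of $\alpha\alpha\cdots$ dense with infinite cyclic isotropy, so Theorem~\ref{t:dense.orbit.case} applies. Your explicit note that density upgrades to $R$-density because $\mathscr G_E$ is Hausdorff is a point the paper leaves implicit, but it is exactly the right justification.
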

\begin{proof}
If $L_R(E)$ is a prime ring, then $R$ is an integral domain and $E$ is downward directed by Proposition~\ref{p:need.trans} and Proposition~\ref{p:MT3}.  Suppose, conversely, that $R$ is an integral domain and $E$ is downward directed.  There are two cases.  Suppose first that $E$ satisfies condition (L).  Then $\mathscr G_E$ is a Hausdorff, effective and topologically transitive groupoid by Proposition~\ref{p:MT3} and Proposition~\ref{p:condL}.  Thus $L_R(E)$ is prime by Theorem~\ref{t:effective.case}. On the other hand, suppose that $E$ contains a cycle $\alpha$ with no exit.  Let $u\in E\skel 0$ and $v$ be a vertex of $\alpha$.  Then, since $E$ is downward directed, there exists $w\in E\skel 0$ with $u,v\geq w$. But $w$ is a vertex of $\alpha$ because $\alpha$ has no exit.  Thus every vertex $u$  has a path to a vertex of $\alpha$. In particular, $E$ has no sinks and if $\beta$ is any finite path in $E$, then $\beta\tau\alpha\alpha\cdots\in \partial E$ for some path $\tau$.  It then follows from the definition of the topology on $\partial E$ that the orbit of $\alpha\alpha\cdots$ is dense. (Recall that orbit of $\gamma\in \partial E$ consists of all strings with a common suffix with $\gamma$.) But the isotropy group at $\alpha\alpha\cdots$ is infinite cyclic and so $RG_{\alpha\alpha\cdots}\cong R[x,x\inv]$ is an integral domain and hence a prime ring.  Therefore, $L_R(E)$ is a prime ring by Theorem~\ref{t:dense.orbit.case}.  This completes the proof.
\end{proof}

Note that since Laurent polynomial rings over reduced rings are reduced, we have the following corollary of Theorem~\ref{t:dense.semiprime}.

\begin{Cor}
Let $E$ be a graph and $R$ a commutative ring with unit.  Then $L_R(E)$ is semiprime if and only if $R$ is reduced.
\end{Cor}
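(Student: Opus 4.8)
The plan is to read both implications off the isotropy structure of $\mathscr{G}_E$ together with Theorem~\ref{t:dense.semiprime}, since the isotropy group algebras here are as simple as possible. Recall that $L_R(E)\cong R\mathscr{G}_E$, that $\mathscr{G}_E$ is Hausdorff, and that the isotropy group $G_\gamma$ at a point $\gamma\in\partial E$ is trivial unless $\gamma$ is eventually periodic, in which case it is infinite cyclic.

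For the sufficiency, suppose $R$ is reduced. Then each $RG_\gamma$ is isomorphic either to $R$ (when $G_\gamma$ is trivial) or to the Laurent polynomial ring $R[x,x\inv]$ (when $G_\gamma\cong\mathbb Z$). A reduced ring is semiprime, and a Laurent polynomial ring over a reduced ring is again reduced, so $R[x,x\inv]$ is semiprime as well. Hence every $RG_\gamma$ is semiprime, and so the set $Z$ of points whose isotropy algebra is semiprime is the whole unit space $\mathscr{G}_E\skel 0=\partial E$. The full unit space is trivially $R$-dense, because for any $0\neq f\in R\mathscr{G}_E$ any $g$ with $f(g)\neq 0$ automatically satisfies $\dom(g)\in\partial E$. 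Theorem~\ref{t:dense.semiprime} then yields that $R\mathscr{G}_E=L_R(E)$ is semiprime.

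For the necessity, I would repeat the necessity argument of Theorem~\ref{t:semiprime.effective}: given $r\in R$ with $r^2=0$, choose any nonempty compact open $U\subseteq\partial E$; then $r\chi_U\ast f\ast r\chi_U=r^2(\chi_U\ast f\ast\chi_U)=0$ for every $f\in R\mathscr{G}_E$, so semiprimeness of $L_R(E)$ forces $r\chi_U=0$ and hence $r=0$. The only real content is the identification of the isotropy algebras as $R$ or $R[x,x\inv]$ and the elementary fact that reducedness is inherited by Laurent polynomial rings; once these are in hand there is no genuine obstacle.
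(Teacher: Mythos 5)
Your proof is correct and takes essentially the same route as the paper: the paper's terse justification (``Laurent polynomial rings over reduced rings are reduced'') is exactly your observation that every isotropy group algebra of $\mathscr G_E$ is $R$ or $R[x,x\inv]$, hence semiprime when $R$ is reduced, so that Theorem~\ref{t:dense.semiprime} applies with $Z=\partial E$ trivially $R$-dense, while the necessity is the element computation from Theorem~\ref{t:semiprime.effective}, which never uses effectiveness. You have simply made explicit the details the paper leaves to the reader.
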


The following example is from~\cite{primleavittbell}. Let $X$ be an uncountable set and let $E$ be the graph whose vertices are all finite subsets of $X$ and there is an edge from $A$ to $B$ if $A$ is a proper subset of $B$.  Note that all vertices are infinite emitters and there are no sinks.  Thus every finite and infinite path belongs to $\partial E$.  This graph is clearly downward directed since there is a path (possibly empty) from $A$ to $B$ if and only if $A$ is a subset of $B$ and we can take unions of finite sets.  Therefore, $L_R(E)$ is prime for any integral domain.  However, $\mathscr G_E$ has no dense orbit.  Indeed, if  $\gamma\in \partial E$ and $Y$ is the set of elements of $X$ that appear in some vertex of $\gamma$, then $Y$ is countable (being a countable union of finite sets).  So there exists $x\in X\setminus Y$.  But then $Z(\varepsilon_{\{x\}})$ is an open set missing the orbit of elements with suffix $\gamma$ (as the vertices of an element of $\partial E$ form an increasing chain).  It follows that, for any field $\Bbbk$, we have that $L_{\Bbbk}(E)$ is prime but not primitive (since primitivity for a groupoid algebra implies the existence of a dense orbit by~\cite[Prop.~4.9]{groupoidprimitive}).  This was already observed in~\cite{primleavittbell} via a groupoid-free argument.  As $E$ has no cycles, and hence satisfies condition (L), we see that $\mathscr G_E$ is an effective Hausdorff groupoid that is topologically transitive with no dense orbit and hence second countability really is required in Corollary~\ref{c:primevsprim}.

We shall present here a proof of the primitivity criterion for Leavitt path algebras from~\cite{primleavittbell} using the results of~\cite{groupoidprimitive}, as we neglected to do so in that paper (we just said it was straightforward to do so).   A graph $E$ is said to satisfy the countable separation property (CSP) if there is a countable (finite or countably infinite) set of vertices $X$ such that, for all $v\in E\skel 0$, there exists $x\in X$ with $v\geq x$. The result of~\cite{primleavittbell} is the following, which we prove using groupoids.

\begin{Thm}
Let $E$ be a directed graph and $\Bbbk$ a field.  Then $L_{\Bbbk}(E)$ is primitive if and only if:
\begin{enumerate}
  \item $E$ satisfies condition (L);
  \item $E$ is downward directed;
  \item $E$ has the countable separation property.
\end{enumerate}
\end{Thm}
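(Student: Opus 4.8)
The plan is to translate everything into the graph groupoid $\mathscr G_E$, using $L_{\Bbbk}(E)\cong \Bbbk\mathscr G_E$, and to exploit the three facts already available: $\mathscr G_E$ is Hausdorff, it is effective exactly when $E$ satisfies condition (L) (Proposition~\ref{p:condL}), and it is topologically transitive exactly when $E$ is downward directed (Proposition~\ref{p:MT3}). The central engine is \cite[Thm.~4.10]{groupoidprimitive}, which says that for an \emph{effective} Hausdorff ample groupoid over a field, primitivity is equivalent to the existence of a dense orbit. Thus, once condition (L) is in force, I only need to match primitivity with the existence of a dense orbit, and then show that, in the presence of downward directedness, a dense orbit exists precisely when CSP holds. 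Condition (L) itself will be handled as a separate, self-contained necessity argument.

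For the necessity direction I would argue as follows. Since a primitive ring is prime, Proposition~\ref{p:need.trans} together with Proposition~\ref{p:MT3} immediately gives that $E$ is downward directed, yielding (2). For (3), primitivity forces a dense orbit $\mathcal O_{\gamma}$ by \cite[Prop.~4.9]{groupoidprimitive}; I then take $X$ to be the (countable) set of vertices occurring in $\gamma$, and for each $v\in E\skel 0$ use that density of $\mathcal O_{\gamma}$ meets the open set $Z(\varepsilon_v)$, producing an element of $\partial E$ starting at $v$ and sharing a suffix with $\gamma$, so that $v\geq x$ for some $x\in X$; this is exactly CSP. For (1), suppose some cycle $c$ based at $v$ has no exit. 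Then every vertex of $c$ has out-degree one, so $Z(\varepsilon_v)=\{c^{\infty}\}$ is a one-point open set, i.e.\ $p=c^{\infty}$ is an isolated point with $G_p\cong\mathbb Z$. The idempotent $e=\chi_{\{p\}}$ then satisfies $e\Bbbk\mathscr G_E e\cong \Bbbk G_p\cong \Bbbk[x,x\inv]$ by \cite[Prop.~4.7]{groupoidprimitive}. Since a corner $eAe$ of a primitive ring $A$ (with $e\neq 0$) is again primitive, while the commutative ring $\Bbbk[x,x\inv]$ is not a field and hence not primitive, this is a contradiction; therefore (L) must hold.

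For the sufficiency direction, assume (1)--(3). By Proposition~\ref{p:condL} the groupoid $\mathscr G_E$ is effective, and it is Hausdorff, so by \cite[Thm.~4.10]{groupoidprimitive} it suffices to produce a dense orbit. If $E$ has a sink then downward directedness makes it the unique sink $w$, and $\mathcal O_{\varepsilon_w}$ is dense exactly as in the proof of Proposition~\ref{p:MT3}. Otherwise $E$ has no sinks, and I would build a single dense orbit by a diagonal construction: fix a countable set $X=\{x_1,x_2,\dots\}$ witnessing CSP, and construct an infinite path $\gamma$ stage by stage, at stage $k$ applying downward directedness to the current endpoint and to $x_k$ to reach a common lower vertex $w_k$, extending $\gamma$ to $w_k$ and then appending one further edge (possible since there are no sinks) to guarantee that $\gamma$ is genuinely infinite. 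By construction each $x_k$ dominates a vertex $w_k$ of $\gamma$, so by CSP every vertex of $E$ dominates a vertex of $\gamma$. To check density, given a nonempty basic open set $Z(\mu)\setminus\bigl(Z(\mu e_1)\cup\cdots\cup Z(\mu e_n)\bigr)$, its nonemptiness supplies an edge out of $\ran(\mu)$ avoiding $e_1,\dots,e_n$; routing from the range of that edge along a path to some vertex of $\gamma$ and then following the tail of $\gamma$ yields an element of $\mathcal O_{\gamma}$ inside the given open set. Hence $\mathcal O_{\gamma}$ is dense and $L_{\Bbbk}(E)\cong\Bbbk\mathscr G_E$ is primitive.

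The main obstacle I anticipate is the sufficiency construction of the dense orbit from (2) and (3): the diagonalization over the CSP witnesses must simultaneously force $\gamma$ to be an honest element of $\partial E$ (infinite, or correctly terminating) and ensure every vertex funnels into $\gamma$, and the density verification must carefully use nonemptiness of the basic open set to avoid the finitely many excluded edges. By contrast, the necessity of (L) is delicate to spot but dispatches cleanly once one observes that a cycle without exit manufactures an isolated point with infinite cyclic isotropy, reducing the question to the non-primitivity of the Laurent polynomial corner.
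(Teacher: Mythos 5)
Your proposal is correct and follows essentially the same route as the paper's proof: the groupoid translation $L_{\Bbbk}(E)\cong\Bbbk\mathscr G_E$, Propositions~\ref{p:MT3} and~\ref{p:condL}, primeness/topological transitivity via Proposition~\ref{p:need.trans} for (2), the isolated-point corner isomorphic to $\Bbbk[x,x\inv]$ to rule out cycles without exit, extraction of CSP from a dense orbit, and \cite[Thm.~4.10]{groupoidprimitive} together with a dense orbit manufactured from downward directedness and CSP for sufficiency. The only deviations are cosmetic: you get the dense orbit for the CSP step directly from \cite[Prop.~4.9]{groupoidprimitive} rather than after first establishing (L), and your sink/no-sink split with the ``append one further edge'' step in the diagonal construction explicitly handles a degeneracy (all connecting paths being empty) that the paper's chain $x_1\geq x_2\geq\cdots$ construction passes over silently.
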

\begin{proof}
Assume first that $L_{\Bbbk}(E)\cong \Bbbk \mathscr G_E$ is primitive.  Then $\Bbbk \mathscr G_E$ is prime and hence $\mathscr G_E$ is topologically transitive by Proposition~\ref{p:need.trans}.  Thus $E$ is downward directed by Proposition~\ref{p:MT3}.  If $\alpha$ is a cycle in $E$ without an exit, then $Z(\alpha)$ is a neighborhood of $\gamma=\alpha\alpha\cdots$ containing no other element of $\partial E$ and so $\gamma$ is isolated.  Therefore $\Bbbk G_{\gamma}$ is primitive by~\cite[Prop.~4.7]{groupoidprimitive}.  But $G_{\gamma}\cong \mathbb Z$~\cite{gpdchain} and so $\Bbbk G_{\gamma}\cong \Bbbk[x,x\inv]$, which is not primitive.  We conclude that $E$ satisfies condition (L). Thus $\mathscr G_E$ is an effective Hausdorff groupoid and hence has a dense orbit by~\cite[Thm.~4.10]{groupoidprimitive}.  Let $\gamma\in \partial E$ belong to this dense orbit and let $X$ be the set of vertices of $\gamma$.  Note that $X$ is countable.  If $v\in E\skel 0$, then $Z(\varepsilon_v)$ intersects the orbit of $\Gamma$.  Hence we can find $\alpha,\beta$ finite paths such that $\gamma=\beta\tau$ and $\alpha\tau \in Z(\varepsilon_v)$.  But then $\alpha$ is a path from $v$ to a vertex of $\gamma$.  Thus $E$ has the countable separation property.

Conversely, suppose that $E$ satisfies condition (L), is downward directed and has CSP.  Then $\mathscr G_E$ is effective and Hausdorff and so it suffices to show that it has a dense orbit by~\cite[Thm.~4.10]{groupoidprimitive}.  Let $X$ be a countable set as in the definition of the countable separation property.  First assume that $X$ is finite.  Then since $E$ is downward directed, we can find $v$ with $x\geq v$ for all $x\in X$.  By definition of CSP, it follows that $w\geq v$ for all $w\in E\skel 0$.  Choose $\gamma\in \partial E$ originating from $v$.  Then any finite path $\alpha$ can be continued to a path ending at $v$ and hence $\alpha\beta\gamma\in \partial E$ for some finite path $\beta$.  It follows from the definition of the topology on $\partial E$ that the orbit of $\gamma$ is dense.  Next suppose that $X=\{v_1,v_2,\ldots\}$ is countably infinite.  Set $x_1=v_1$ and assume inductively we have chosen $x_1,\ldots, x_n$ such that $x_1\geq x_2\geq\cdots \geq x_n$ and $v_i\geq x_i$ for $1\leq i\leq n$.  Then since $E$ is downward directed, we can find $x_{n+1}$ with $x_1,\ldots,x_n,v_{n+1}\geq x_{n+1}$.  Thus we can find an infinite path $\gamma=\alpha_1\alpha_2\cdots$ where $\alpha_i$ is a directed path from $x_i$ to $x_{i+1}$.  We claim that the orbit of $\gamma$ is dense.  Again, any finite path $\alpha$ has a continuation to a vertex $v_n$ of $X$ and hence, by construction, to a vertex $x_n$ of $\gamma$.  Hence, any finite path has a continuation to an infinite path having a common suffix with $\gamma$. It follows from the definition of the topology that the orbit of $\gamma$ is dense.  This completes the proof.
\end{proof}

\subsection{Inverse semigroup algebras}
To apply the above results  to inverse semigroups, we need to discuss first how to realize an inverse semigroup algebra as an ample groupoid algebra.  Fix an  inverse semigroup $S$ for the rest of this subsection.
First we recall the construction of the universal groupoid $\mathscr G(S)$ of an inverse semigroup and the contracted universal groupoid $\mathscr G_0(S)$ for an inverse semigroup with zero. See~\cite{Exel,Paterson,mygroupoidalgebra,groupoidprimitive} for details.

A \emph{character} of a semilattice $E$ is a non-zero homomorphism $\theta\colon E\to \{0,1\}$ where $\{0,1\}$ is a semilattice under multiplication.  The \emph{spectrum} of $E$ is the space $\wh E$ of characters of $E$, topologized as a subspace of $\{0,1\}^E$.  Note that $\wh E$ is Hausdorff with a basis of compact open sets.  Indeed, if we put $D(e)=\{\theta\in \wh E\mid \theta(e)=1\}$ for $e\in E(S)$, then the sets of the form $D(e)\cap D(e_1)^c\cap\cdots D(e_n)^c$ form a basis of compact open sets for the topology, where $X^c$ denotes the complement of $X$.   If $e\in E$, then the \emph{principal character} $\theta_e\colon E\to \{0,1\}$ is defined by
\[\theta_e(f)=\begin{cases} 1, & \text{if}\ f\geq e\\ 0, & \text{else.}\end{cases}\]  The principal characters are dense in $\wh E$.  If $E$ has a zero element, then a character $\theta$ is called \emph{proper} if $\theta(0)=0$ or, equivalently, $\theta\neq \theta_0$.  The set of proper characters will be denoted $\wh E_0$.  Notice that $D(0)=\{\theta_0\}$ and so $\theta_0$ is always an isolated point of $\wh E$.

Let $S$ be an inverse semigroup. Then $S$ acts on $\wh{E(S)}$.  The domain of the action of $s$ is $D(s^*s)$.  If $\theta\in D(s^*s)$, then $(s\theta)(e) = \theta(s^*es)$.   If $S$ has a zero, then $\wh{E(S)}_0$ is invariant under $S$.  The \emph{universal groupoid} of $S$ is the groupoid of germs $\mathscr G(S)=S\ltimes \wh{E(S)}$. Note that the isotropy group $G_{\theta_e}$ of a principal character $\theta_e$ is isomorphic to the maximal subgroup $G_e$  and two principal characters $\theta_e,\theta_f$ are in the same orbit if and only if there exists $s\in S$ with $s^*s=e$ and $ss^*=f$ (cf.~\cite{mygroupoidalgebra}).

If $S$ has a zero, we put $\mathscr G_0(S)=S\ltimes \wh{E(S)}_0$ and call it the \emph{contracted universal groupoid} of $S$.

The following theorem is fundamental to the subject.  See~\cite{mygroupoidalgebra,groupoidprimitive}.

\begin{Thm}\label{t:isothm}
Let $S$ be an inverse semigroup and $R$ a commutative ring with unit.  Then $RS\cong R\mathscr G(S)$. The isomorphism sends $s\in S$ to $\chi_{(s,D(s^*s))}$.  If $S$ has a zero, then $R_0S\cong R\mathscr G_0(S)$.
\end{Thm}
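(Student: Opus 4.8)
The plan is to construct the isomorphism explicitly and verify it is a well-defined $R$-algebra homomorphism that is bijective. First I would define the map $\Phi\colon RS \to R\mathscr G(S)$ on the basis $S$ by sending each $s$ to $\chi_{(s,D(s^*s))}$ and extending $R$-linearly. Here $(s,D(s^*s))$ is a local bisection of the groupoid of germs $\mathscr G(S) = S \ltimes \wh{E(S)}$, and since $D(s^*s)$ is compact open in $\wh{E(S)}$ (being of the form $D(s^*s)$ in the standard basis), the bisection $(s,D(s^*s))$ lies in $\Bis_c(\mathscr G(S))$, so $\chi_{(s,D(s^*s))} \in R\mathscr G(S)$. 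Because $RS$ is the free $R$-module on the basis $S$, the map $\Phi$ is automatically well defined and $R$-linear; the content is entirely in checking it is multiplicative and bijective.

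For multiplicativity, the key computation is $\chi_{(s,D(s^*s))} \ast \chi_{(t,D(t^*t))} = \chi_{(s,D(s^*s))(t,D(t^*t))}$, using the identity $\chi_U \ast \chi_V = \chi_{UV}$ for $U,V \in \Bis_c(\mathscr G)$ recalled in the preliminaries. So I must show that the product bisection $(s,D(s^*s))(t,D(t^*t))$ equals $(st, D((st)^*(st)))$. Unwinding the groupoid multiplication $[s,ty][t,y] = [st,y]$, an element of the product bisection is a germ $[st,y]$ where $y \in D(t^*t)$ and $ty \in D(s^*s)$; the condition $ty \in D(s^*s)$ translates, via $(ty)(e) = y(t^*et)$, to $y \in D(t^*s^*st) = D((st)^*(st))$. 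This identifies the product bisection with $(st, D((st)^*(st)))$ up to the germ equivalence, so $\Phi(s)\Phi(t) = \Phi(st)$ and $\Phi$ is an algebra homomorphism. The main obstacle I anticipate is bookkeeping the germ equivalence relation $\sim$ carefully: one must confirm that distinct products $s,t$ with $st = s't'$ give the same germ bisection and that no spurious identifications occur, which is exactly where the defining relation of $\mathscr G(S)$ (via elements $u \leq s,t$) must be invoked.

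For bijectivity, I would argue that $\Phi$ is surjective because $R\mathscr G(S)$ is spanned by $\{\chi_U : U \in \Bis_c(\mathscr G(S))\}$ and every compact open bisection decomposes, via the basis $(s,U)$ with $U$ compact open in $D(s^*s)$ and the inclusion-exclusion expressing $\chi_U$ for $U = D(e) \setminus (D(e_1) \cup \cdots \cup D(e_n))$, into an $R$-combination of the generators $\chi_{(s,D(s^*s))}$; this uses that $R\mathscr G(S)$ is a quotient of $R\Bis_c(\mathscr G(S))$. For injectivity, I would exhibit the inverse on the principal-character level: since the principal characters $\theta_e$ are dense in $\wh{E(S)}$ and the isotropy and orbit structure at $\theta_e$ recovers $S$ faithfully (as recorded in the remarks preceding the theorem), any $R$-linear dependence among the $\chi_{(s,D(s^*s))}$ forces the corresponding dependence in $RS$. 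Finally, for an inverse semigroup with zero, the statement $R_0S \cong R\mathscr G_0(S)$ follows by restricting to the invariant open subspace $\wh{E(S)}_0$ of proper characters: the contraction $R_0S = RS/Rz$ corresponds to quotienting out the isolated point $\theta_0 = D(0)$, whose removal yields precisely $\mathscr G_0(S) = S \ltimes \wh{E(S)}_0$, and $\Phi$ descends to the claimed isomorphism.
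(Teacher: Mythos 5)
First, note that the paper does not actually prove Theorem~\ref{t:isothm}: it is quoted from~\cite{mygroupoidalgebra,groupoidprimitive}, so your attempt can only be measured against the proof in those references. Your homomorphism step is correct and is the standard one: since $S$ is a free $R$-basis of $RS$, well-definedness is automatic (the worry about $st=s't'$ is vacuous; multiplicativity on basis elements is all that is needed), and the bisection identity $(s,D(s^*s))(t,D(t^*t))=(st,D((st)^*(st)))$ follows from $(t\theta)(e)=\theta(t^*et)$ together with $\chi_U\ast\chi_V=\chi_{UV}$, noting $(st)^*(st)\leq t^*t$ so no extra domain condition survives. Your surjectivity sketch is also essentially the argument of~\cite{mygroupoidalgebra}: $\chi_{(s,U)}=\chi_{(s,D(s^*s))}\ast\chi_U$, and $\chi_U$ for $U$ a compact open subset of the Hausdorff space $\wh{E(S)}$ is an inclusion--exclusion combination of the $\chi_{D(e)}$, which are images of idempotents. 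You should, however, justify that every compact open bisection of $\mathscr G(S)$ decomposes into the basic slices $(s,U)$; since $\mathscr G(S)\skel 1$ may fail to be Hausdorff, intersections of compact opens need not be compact, and this decomposition is where the cited source does genuine work.

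The genuine gap is injectivity. Asserting that density of the principal characters plus ``the isotropy and orbit structure at $\theta_e$ recovers $S$ faithfully'' forces linear dependences back into $RS$ is not an argument: what must be proved is that $\{\chi_{(s,D(s^*s))}\mid s\in S\}$ is $R$-linearly independent in $R\mathscr G(S)$, and these functions have badly overlapping supports --- if $s\leq t$ then every germ of $s$ is a germ of $t$, so $(s,D(s^*s))\subseteq (t,D(t^*t))$ and nothing resembling disjointness is available; moreover, density (or $R$-density) of the $\theta_e$ is a statement about the unit space, while independence is a statement about germs over those points. The standard repair is an evaluation-plus-triangularity argument: given $\sum_{i=1}^n r_i\chi_{(s_i,D(s_i^*s_i))}=0$ with the $s_i$ distinct, evaluate at the germ $[s_j,\theta_{e_j}]$ with $e_j=s_j^*s_j$. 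One computes that $\chi_{(s_i,D(s_i^*s_i))}([s_j,\theta_{e_j}])=1$ if and only if there exists $u\leq s_i,s_j$ with $u^*u\geq e_j$; but $u\leq s_j$ and $u^*u\geq s_j^*s_j$ force $u=s_j$, so the value is $1$ exactly when $s_j\leq s_i$. This yields $\sum_{\{i\,:\,s_i\geq s_j\}}r_i=0$ for every $j$, a unitriangular system with respect to any linear extension of the natural partial order on the finite set $\{s_1,\ldots,s_n\}$, whence all $r_i=0$. Without this (or the M\"obius-type argument of~\cite{mygroupoidalgebra}) your proof is incomplete precisely at the theorem's hardest point. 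Your contracted-case reduction, by contrast, is fine: $z$ maps to $\chi_{\{\theta_0\}}$, the identity germ at the isolated invariant point $\theta_0$, so killing $Rz$ is exactly restriction to $\mathscr G_0(S)$.
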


A semilattice $E$ is \emph{pseudofinite}~\cite{MunnSemiprim} if, for all $e\in E$, the set of elements strictly below $e$ is finitely generated as a lower set.  In~\cite[Prop.~2.5]{mygroupoidalgebra}, it was shown that this is equivalent to the principal characters being isolated points of $\wh{E}$.  %Hence Proposition~\ref{p:isolated} has the following consequence, first proved by Munn~\cite{Munnprime}.

%\begin{Cor}\label{c:pseudofinite}
%Let $S$ be an inverse semigroup (with zero) such that $E(S)$ is pseudofinite.  If $RS$ ($R_0S$) is a prime ring, then $RG_e$ is a prime ring for each maximal subgroup $G_e$.
%\end{Cor}

It is proved in~\cite{groupoidprimitive} that the principal characters are $R$-dense in $\wh{E(S)}$ for any commutative ring with unit $R$.  We recall that an inverse semigroup $S$ (with zero) is said to be \emph{($0$-)bisimple} if, for all $e,f\in E(S)$ (non-zero), there exists $s\in S$ with $s^*s=e$ and $ss^*=f$.  In a ($0$-)bisimple inverse semigroup, all (non-zero) idempotents have isomorphic maximal subgroups, cf.~\cite{Lawson}. The following result was originally proved by Munn~\cite{Munnprime}.

\begin{Thm}\label{t:munn.prime}
Let $S$ be an inverse semigroup (with zero). Then if $S$ is ($0$-)bisimple with maximal subgroup $G$ (at a non-zero idempotent) and $R$  is a commutative ring with unit such that $RG$ is prime, then $RS$ ($R_0S$) is prime.  The converse holds if $E(S)$ is pseudofinite.
\end{Thm}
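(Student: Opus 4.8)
The plan is to pass everything through the isomorphisms $RS\cong R\mathscr G(S)$ and $R_0S\cong R\mathscr G_0(S)$ of Theorem~\ref{t:isothm}, and then invoke the general groupoid criteria proved earlier. I would treat the zero-free and the zero cases in parallel: for the former I work with the universal groupoid $\mathscr G(S)$ and all principal characters $\theta_e$, while for the latter I work with the contracted universal groupoid $\mathscr G_0(S)$ together with the \emph{proper} principal characters $\theta_e$ ($e\neq 0$).

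For the sufficiency direction, the key observation is that ($0$-)bisimplicity forces all the relevant principal characters into a single orbit. Since $\theta_e$ and $\theta_f$ lie in the same orbit precisely when there exists $s\in S$ with $s^*s=e$ and $ss^*=f$, the defining property of a ($0$-)bisimple semigroup says exactly that any two (proper) principal characters are orbit-equivalent. As the (proper) principal characters are $R$-dense in the unit space by the result of~\cite{groupoidprimitive} recalled above, this single orbit $\mathcal{O}$ is $R$-dense. Its isotropy group is $G_{\theta_e}\cong G_e\cong G$, and $RG$ is prime by hypothesis, so Theorem~\ref{t:dense.orbit.case} immediately yields that $R\mathscr G(S)\cong RS$ (respectively $R\mathscr G_0(S)\cong R_0S$) is prime. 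Note that this direction does not use pseudofiniteness.

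For the converse, I would use pseudofiniteness precisely to manufacture an isolated point. Since $E(S)$ is pseudofinite, each principal character is an isolated point of $\wh{E(S)}$ by~\cite[Prop.~2.5]{mygroupoidalgebra}; deleting the isolated (and, by Hausdorffness, clopen) point $\theta_0$ leaves the proper principal characters isolated in $\wh{E(S)}_0$ as well. Fixing a non-zero idempotent $e$, Proposition~\ref{p:isolated} applied to the prime ring $R\mathscr G(S)$ (respectively $R\mathscr G_0(S)$) at the isolated point $\theta_e$ shows that $RG_{\theta_e}$ is prime, and since $G_{\theta_e}\cong G$ this gives that $RG$ is prime.

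The only genuinely delicate point is the $R$-density claim in the zero case: the cited result gives $R$-density of the principal characters inside $\wh{E(S)}$ relative to $R\mathscr G(S)$, whereas I need $R$-density of the proper principal characters inside $\wh{E(S)}_0$ relative to $R\mathscr G_0(S)$. Here I would note that $\{\theta_0\}$ is a clopen invariant point, so $\wh{E(S)}_0$ is a clopen invariant subspace and every $f\in R\mathscr G_0(S)$ is supported on its restriction; feeding such an $f$ into the $R$-density of all principal characters produces a witness $g$ with $f(g)\neq 0$ whose source $\dom(g)=\theta_e$ is necessarily proper, which is exactly what is required. Everything else is a direct application of the earlier results, so I expect this descent of $R$-density to be the main (and only mild) obstacle.
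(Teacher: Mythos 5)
Your sufficiency direction is exactly the paper's: ($0$-)bisimplicity is equivalent to the (proper) principal characters forming a single orbit, that orbit is $R$-dense with isotropy group $G$, and Theorem~\ref{t:dense.orbit.case} applies. Your extra care in descending $R$-density from $\wh{E(S)}$ to the clopen invariant subspace $\wh{E(S)}_0$ in the contracted case is sound (the paper simply declares the zero case identical). The genuine gap is in your converse: the converse asserts \emph{both} that $S$ is ($0$-)bisimple \emph{and} that $RG$ is prime, and you prove only the latter. Moreover, until bisimplicity is established, ``the maximal subgroup $G$'' is not even well defined up to isomorphism --- a priori distinct (non-zero) idempotents may have non-isomorphic maximal subgroups --- so your step ``since $G_{\theta_e}\cong G$'' is circular at that point. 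What your argument actually yields is that $RG_e$ is prime for \emph{every} (non-zero) idempotent $e$, which is not the stated conclusion.

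The missing ingredient is where primeness interacts with the topology of the unit space, and it is precisely why the paper proved Proposition~\ref{p:need.trans}: primeness of $RS$ (resp.\ $R_0S$) forces $\mathscr G(S)$ (resp.\ $\mathscr G_0(S)$) to be topologically transitive. Pseudofiniteness makes each principal character $\theta_e$ isolated, so $\{\theta_e\}$ is open and, by Proposition~\ref{p:open.inv}, its orbit is a non-empty \emph{open} invariant set. If there were two distinct orbits of (proper) principal characters, they would be disjoint non-empty open invariant sets, contradicting topological transitivity; hence all (proper) principal characters lie in a single orbit. Since $\theta_e$ and $\theta_f$ are orbit-equivalent exactly when there exists $s\in S$ with $s^*s=e$ and $ss^*=f$, this single orbit is precisely the statement that $S$ is ($0$-)bisimple. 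Once that is in hand, $G$ is well defined up to isomorphism and your appeal to Proposition~\ref{p:isolated} at one isolated point $\theta_e$ correctly completes the proof; so your argument needs this transitivity step inserted, after which it coincides with the paper's.
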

\begin{proof}
We just handle the case without zero,  as the case of semigroups with zero is identical.  The hypothesis that $S$ is bisimple is equivalent to the principal characters forming a single orbit.   Thus the principal characters form an $R$-dense orbit with isotropy group $G$ and the first statement follows from Theorem~\ref{t:dense.orbit.case}.

Conversely, suppose that $E(S)$ is pseudofinite and $RS$ is prime.  Then each principal character is isolated and the orbit of a principal character is an open invariant set.  Since $\mathscr G(S)$ must be topologically transitive by Proposition~\ref{p:need.trans}, we deduce that there is only one orbit of principal characters.  But this is equivalent to $S$ being bisimple.  Moreover, as each principal character is isolated, if $G$ is the maximal subgroup of $S$, then $RG$ is prime by Proposition~\ref{p:isolated}.
\end{proof}

Let us consider the analogue for semiprimeness.  The following result is due to Munn~\cite{Munnprime}.

\begin{Thm}
Let $S$ be an inverse semigroup (with zero) and $R$ a commutative ring with unit.  If $RG_e$ is semiprime for each idempotent $e$, then $RS$ ($R_0S$) is semiprime.  The converse holds if $E(S)$ is pseudofinite.
\end{Thm}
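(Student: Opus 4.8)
The plan is to mirror the proof of Theorem~\ref{t:munn.prime} (Munn's primeness criterion), replacing its appeal to Theorem~\ref{t:dense.orbit.case} by the semiprime counterpart Theorem~\ref{t:dense.semiprime}. As there, I would first pass to the universal groupoid: by Theorem~\ref{t:isothm} we have $RS\cong R\mathscr G(S)$, so it suffices to decide when $R\mathscr G(S)$ is semiprime, and the relevant data are the isotropy groups of $\mathscr G(S)$. The two facts I will lean on are that the isotropy group $G_{\theta_e}$ at a principal character is isomorphic to the maximal subgroup $G_e$, whence $RG_{\theta_e}\cong RG_e$ as $R$-algebras, and that the principal characters are $R$-dense in $\wh{E(S)}$.

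For the forward implication, assume $RG_e$ is semiprime for every idempotent $e$. Let $Z$ be the set of $x\in \mathscr G(S)\skel 0=\wh{E(S)}$ whose isotropy algebra $RG_x$ is semiprime. Each principal character $\theta_e$ lies in $Z$, since $RG_{\theta_e}\cong RG_e$ is semiprime by hypothesis. Thus $Z$ contains the set of principal characters; because that set is $R$-dense and any superset of an $R$-dense set is again $R$-dense (the defining condition only requires the existence of some $g$ with $f(g)\neq 0$ and $\dom(g)\in Z$), the set $Z$ is itself $R$-dense. Theorem~\ref{t:dense.semiprime} then gives that $R\mathscr G(S)$, and hence $RS$, is semiprime.

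For the converse, assume $E(S)$ is pseudofinite and $RS\cong R\mathscr G(S)$ is semiprime. Pseudofiniteness is precisely the condition that every principal character $\theta_e$ is an isolated point of $\wh{E(S)}$ (cf.~\cite[Prop.~2.5]{mygroupoidalgebra}). Applying Proposition~\ref{p:isolated} at each such isolated point gives that $RG_{\theta_e}$ is semiprime, and since $RG_{\theta_e}\cong RG_e$ this says exactly that $RG_e$ is semiprime for every idempotent $e$. The zero case is handled identically through the contracted universal groupoid $\mathscr G_0(S)=S\ltimes \wh{E(S)}_0$ together with $R_0S\cong R\mathscr G_0(S)$, using that the proper principal characters are $R$-dense in $\wh{E(S)}_0$ and remain isolated when $E(S)$ is pseudofinite. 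I do not anticipate a genuine obstacle, since both directions are bookkeeping on top of the machinery already in place; the only points needing care are the same as in the primeness argument, namely keeping straight the correspondence $\theta_e\leftrightarrow G_e$ between principal characters and maximal subgroups, and making sure that it is \emph{$R$-density} — not mere density — that both the density of the principal characters and the hypothesis of Theorem~\ref{t:dense.semiprime} supply, so that the non-Hausdorff case is covered.
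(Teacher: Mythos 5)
Your proposal is correct and is essentially the paper's own proof: the sufficiency follows from the $R$-density of the principal characters (whose isotropy groups are the maximal subgroups $G_e$) together with Theorem~\ref{t:dense.semiprime}, and the converse uses that pseudofiniteness makes the principal characters isolated so that Proposition~\ref{p:isolated} applies. Your additional remarks — that a superset of an $R$-dense set is $R$-dense, and that the zero case runs identically through $\mathscr G_0(S)$ — are exactly the details the paper leaves implicit.
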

\begin{proof}
Since the principal characters are $R$-dense the sufficiency follows from Theorem~\ref{t:dense.semiprime}.  If $E(S)$ is pseudofinite, then the principal characters are isolated and Proposition~\ref{p:isolated} provides the desired conclusion.
\end{proof}

We remark that it is a long-standing, and most likely difficult, question to describe all prime or semiprime inverse semigroup algebras.
%\bibliographystyle{abbrv}
%\bibliography{standard2}

\end{document}